\newtheorem{teore}{Theorem}[section]
\newcommand{\subia}{{\bf subcase~1a}}
\newcommand{\subiia}{{\bf subcase~2a}}
\newcommand{\myvar}{x}
\newtheorem{defn}[teore]{Definition}
\newtheorem{lemat}[teore]{Lemma}
\newtheorem{coro}[teore]{Corollary}
\newtheorem{conj}[teore]{Conjecture}
\newtheorem{remark}[teore]{Remark}
\newtheorem{Example}[teore]{Example}
\begin{document}
\DeclarePairedDelimiter\ceil{\lceil}{\rceil}
\DeclarePairedDelimiter\floor{\lfloor}{\rfloor}

\title[Index]{On $I$-eigenvalue free threshold graphs}

\author[Allem, Oliveira and Tura]{Luiz Emilio Allem{\footnotesize$^\dagger$}, Elismar R. Oliveira{\footnotesize$^\dagger$} \and Fernando Tura{\footnotesize$^\ddagger$}}

  %

\maketitle

\vspace*{-8mm}
\begin{center}

\footnotesize{ $^\dagger$ Instituto de Matem\'{a}tica e Estat\'{i}stica, UFRGS, 91509–900 Porto Alegre, RS, Brazil}
\vspace{2mm}

\footnotesize{ $^\ddagger$ Departamento de Matemática, UFSM, 97105–900 Santa Maria, RS, Brazil}
\vspace{2mm}

{\tt emilio.allem@ufrgs.br},~~~ {\tt elismar.oliveira@ufrgs.br}, ~~~{\tt ftura@smail.ufsm.br}
\end{center}
\vspace{6mm}


\begin{abstract}
A graph is said to be $I$-eigenvalue free if it has no eigenvalues in the interval $I$ with respect to the adjacency matrix $A$. In this paper we present two algorithms for generating $I$-eigenvalue free threshold graphs.

\end{abstract}


\section{Introduction}

Since the remarkable study of A. J. Hoffman \cite{hoffman1972limit} about the density of eigenvalues on the real line, several interesting results have been obtained on this topic as you can see in \cite{estes1992eigenvalues,salez2015every,zhang2006limit}. From this subject, a new area of research has arosen, finding classes of graphs that are $I$-eigenvalue free, where this paper stands.

A threshold graph can be constructed through an iterative process which starts with an isolated vertex, and at each step either a new isolated vertex is added or a dominating vertex is added, i.e., a vertex adjacent to all previous vertices is added. This process can be represented by a binary sequence that will be detailed in Section \ref{rep}.

This class of graphs was introduced by Chv\'{a}tal and Hammer \cite{chvtal1977aggregation} and Henderson and Zalestein \cite{henderson1977graph} in 1977 and their numerous applications go from computer science to psychology \cite{mahadev1995threshold}.

This paper deals with spectral properties of threshold graphs related to the adjacency matrix.  In \cite{sciriha2011spectrum} Sciriha and Farrugia proved that all eigenvalues, other than $-1$ and $0$, are main. Jacobs et al. \cite{jacobs2015eigenvalues} showed that threshold graphs are $(-1,0)$-eigenvalue free. A subclass of threshold graphs is the family of anti-regular graphs which are the graphs with only two vertices of equal degrees. In \cite{aguilar2018spectral} Aguilar et al. proved that anti-regular graphs have no eigenvalues in the interval $\Omega=\left[ \frac{-1-\sqrt{2}}{2},\frac{-1+\sqrt{2}}{2}\right]$ except the trivial eigenvalues $-1$ and $0$. This result was extended by Ghorbani in \cite{ghorbani2019eigenvalue} by showing that threshold graphs have no eigenvalues in $\Omega$ except the trivial ones.

Searching for spectral properties of graphs, in this paper, we provide two algorithms for generating infinite families of $I$-eigenvalue free threshold graphs. More specifically, given a scalar $N$ and a natural number $r$ we construct a threshold graph $G$ with associated cotree $T_{G}(a_{1},a_{2},\ldots,a_{r})$ of depth $r$, which will be the initial threshold graph for generating infinite families of $(0,N]$-eigenvalue free threshold graphs. The same result is obtained for the interval $[M,-1)$. We show that our approach also can be used to prove that threshold graphs are $\left(\frac{-1-\sqrt{2}}{2},-1\right]$ and $\left[0,\frac{-1+\sqrt{2}}{2}\right)$-eigenvalue free which shows that we have a generalization of the above mentioned results.

Here is an outline of the paper.
In Section \ref{rep} we describe the cotrees associated to threshold graphs, and in Section \ref{AlgDiag} we present some important results from \cite{jacobs2018eigenvalue}. In Sections \ref{Inertia0} and \ref{Inertia-1} we explain our strategy based on the inertia of the graph. The algorithms for generating infinite families of $I$-eigenvalue free threshold graphs are given in Section \ref{secN} and \ref{secM}. In Section \ref{Family} we exhibit such families. Our method is used to prove that threshold graphs are $\left(\frac{-1-\sqrt{2}}{2},-1\right]$ and $\left[0,\frac{-1+\sqrt{2}}{2}\right)$-eigenvalue free in Section \ref{secrec}. Finally, in Section \ref{Future} we discuss about some open problems.

\section{Threshold representation}
\label{rep}
Let $G=(V,E)$ be an undirected graph with vertex set $V$ and edge set $E$. If $|V|=n$, then its adjacency matrix $A(G)=[a_{ij}]$ is the $n\times n$ matrix of zeros and ones such that $a_{ij}=1$ if and only if $v_{i}$ is adjacent to $v_{j}$ (that is, there is an edge between $v_{i}$ and $v_{j}$). For $v\in V$, $N(v)$ denotes the open neighborhood of $v$, that is $\{w|\{v,w\}\in E\}$ and $N[v]=N(v)\cup\{v\}$ the closed neighborhood. A value $\lambda$ is an eigenvalue if det$(\lambda I_{n}-A)=0$, and since $A$ is real symmetric its eigenvalues are real. In this paper, a graph's eigenvalues are the eigenvalues of its adjacency matrix.\\

We may represent a threshold graph on $n$ vertices using a binary sequence $(b_{1},b_{2},\ldots,b_{n})$ as follows: $b_{i}=0$ if vertex $v_{i}$ is added as an isolated vertex, and $b_{i}=1$ if $v_{i}$ is added as a dominating vertex. In Figure \ref{binary} we show the threshold graph $G$ with binary sequence $b=(1,1,1,1,0,0,0,1,1)$. It can also be seen as consecutive blocks of $0$'s and $1$'s, for instance $b=(1,1,1,1,0,0,0,1,1)=1^{4}0^{3}1^{2}$. Notice that we order the vertices of $G$ in the same way they are given in their sequence.

%
%

    %
    %
    %
    %


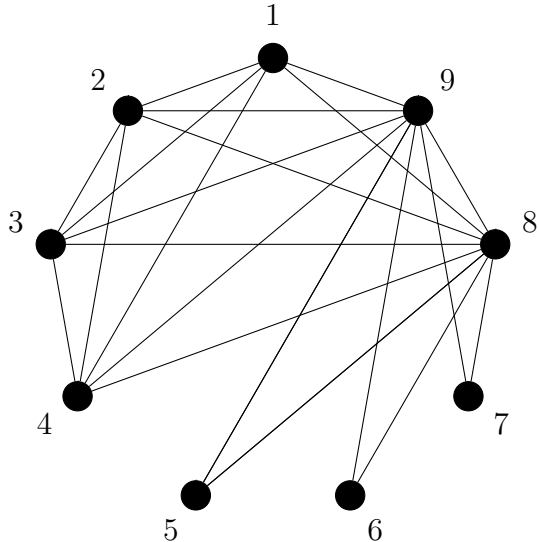
\begin{figure}[H]
\begin{tikzpicture}
  [scale=1,auto=left,every node/.style={circle}]
  \foreach \i/\w in {1/,2/,3/,4/,5/,6/,7/,8/,9/}{
    \node[draw,circle,fill=black,label={360/9 * (\i - 1)+90}:\i] (\i) at ({360/9 * (\i - 1)+90}:3) {\w};} 
  \foreach \from in {2,3,4,8,9}{
    \foreach \to in {1,2,...,\from}
      \draw (\from) -- (\to);}
       \foreach \from in {5}{
       \foreach \to in {8,9,\from}
       \draw (\from) -- (\to)
       ;}
\end{tikzpicture}
       \caption{ $b=(1,1,1,1,0,0,0,1,1)=1^4 0^3 1^2$}
       \label{binary}
 \end{figure}

Threshold graphs are a subclass of cographs and each cograph can be represented by a cotree, for more details see \cite{biyikoglu2011some,jacobs2018eigenvalue}. It is interesting that for threshold graphs, the cotree is a caterpillar, as shown in \cite{sciriha2011spectrum}.\\

In this note, we focus on representing the threshold graph using its cotree (caterpillar), that we describe below. A cotree $T_{G}$ of a threshold graph is a rooted path in which any interior vertex $w$ is either of union $\cup$ type (corresponding to a block of $0$'s) or join $\otimes$ type (corresponding to a block of $1$'s). The terminal vertices (leaves) are typeless and represent the vertices of the threshold. Since we work only with connected thresholds graphs in this paper, our cotree is basically defined by placing a $\otimes$ node at the trees's root. And then, placing $\cup$ on interior nodes with odd depth, and placing $\otimes$ on interior nodes with even depth.\\

Notice that, if a cotree $T_{G}$ associated to a threshold graph has an even depth then its final interior node is a $\cup$ type, and if its depth is odd then it is a $\otimes$ type as in Figure \ref{cotree}.

The cotree denoted by $T_{G}(a_{1},\, a_{2},\, \ldots,\, a_{r})$ with $a_{r}\geq 2$ and $r$ odd is depicted in Figure \ref{cotree}. Notice that following our notation each interior node at depth $i$ has $a_{i}$ terminal vertices (leaves).

\begin{figure}[H]
\centering
\begin{tikzpicture}

\path( 0.5,1)node[shape=circle,draw,label=above:$r$,inner sep=0] (pri) {$\otimes$}
     ( 0,0)node[shape=circle,draw,fill=black] (pri1) {}
     ( 1,0)node[shape=circle,draw,fill=black] (pri2) {}
     ( 2,1)node[shape=circle,draw,label=above:$r-1$,inner sep=0] (sec) {$\cup$}
     ( 1.5,0)node[shape=circle,draw,fill=black] (sec1) {}
     ( 2.5,0)node[shape=circle,draw,fill=black] (sec2) {}
     ( 4,1)node[shape=circle,draw,label=above:$2$,inner sep=0] (ter) {$\cup$}
     ( 3.5,0)node[shape=circle,draw,fill=black] (ter1) {}
     ( 4.5,0)node[shape=circle,draw,fill=black] (ter2) {}
     ( 5.5,1)node[shape=circle,draw,label=above:$1$,inner sep=0] (qua) {$\otimes$}
     ( 5,0)node[shape=circle,draw,fill=black] (qua1) {}
     ( 6,0)node[shape=circle,draw,fill=black] (qua2) {};

      \draw[-](pri)--(pri1);
      \draw[-](pri)--(pri2);
      \draw[dotted](pri1)--(pri2);
      \draw[-](sec)--(pri);
      \draw[-](sec)--(sec1);
      \draw[-](sec)--(sec2);
      \draw[dotted](sec1)--(sec2);

      \draw[-](ter)--(ter1);
      \draw[-](ter)--(ter2);
      \draw[dotted](ter1)--(ter2);

       \draw[dotted](ter)--(sec);

      \draw[-](qua)--(qua1);
      \draw[-](qua)--(qua2);
      \draw[dotted](qua1)--(qua2);

       \draw[-](ter)--(qua);

\end{tikzpicture}
\caption{\label{cotree} Cotree $T_{G}(a_{1},a_{2},\ldots,a_{r})$.}
\end{figure}
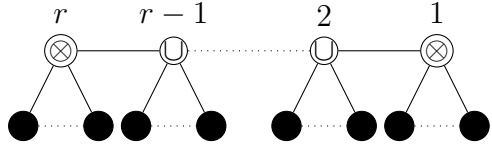

The binary representation corresponding to the cotree in Figure \ref{cotree} is given by $b=\left( 1^{a_{r}},b_{2}^{a_{r-1}},\ldots,b_{r-1}^{a_{2}},1^{a_{1}}\right)$. For instance, the threshold with binary representation  $b=(1,1,1,1,0,0,0,1,1)=$ $1^{4}0^{3}1^{2}$ has the cotree representation $T_{G}(2,3,4)$ depicted in Figure \ref{caterpillar}.

\begin{figure}[H]
\centering
\begin{tikzpicture}
\path(0.7,1)node[shape=circle,draw,label=above:$3$,inner sep=0] (pri) {$\otimes$}
     (0,0)node[shape=circle,draw,fill=black] (pri1) {}
     (0.5,0)node[shape=circle,draw,fill=black] (pri2) {}
     (1,0)node[shape=circle,draw,fill=black] (pri3) {}
     (1.5,0)node[shape=circle,draw,fill=black] (pri4) {}

     ( 2.5,1)node[shape=circle,draw,label=above:$2$,inner sep=0] (sec) {$\cup$}
     ( 2,0)node[shape=circle,draw,fill=black] (sec1) {}
     ( 2.5,0)node[shape=circle,draw,fill=black] (sec2) {}
     ( 3,0)node[shape=circle,draw,fill=black] (sec3) {}

     ( 4,1)node[shape=circle,draw,label=above:$1$,inner sep=0] (ter) {$\otimes$}
     ( 3.75,0)node[shape=circle,draw,fill=black] (ter1) {}
     ( 4.25,0)node[shape=circle,draw,fill=black] (ter2) {};

      \draw[-](pri)--(sec);
      \draw[-](pri)--(pri1);
      \draw[-](pri)--(pri2);
       \draw[-](pri)--(pri3);
      \draw[-](pri)--(pri4);

      \draw[-](sec)--(sec3);
      \draw[-](sec)--(sec1);
      \draw[-](sec)--(sec2);

      \draw[-](ter)--(ter1);
      \draw[-](ter)--(ter2);
      \draw[-](ter)--(sec);

\end{tikzpicture}
\caption{\label{caterpillar} Cotree $T_{G}(2,3,4)$.}
\end{figure}
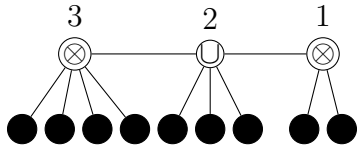

Finally, we define siblings vertices given its important role in the structure of cographs and in the following algorithm. Two vertices $u$ and $v$ are duplicates if $N(u)=N(v)$ and coduplicates if $N[u]=N[v]$. Therefore, we call $u$ and $v$ siblings if they are either duplicates or coduplicates.

\section{Algorithm Diagonalize}
\label{AlgDiag}

The algorithm presented in this section was developed in \cite{jacobs2018eigenvalue}. Basically, it constructs a diagonal matrix $D$ congruent to $A+xI$, where $A$ is the adjacency matrix of a cograph $G$.

\begin{algorithm}[H]
\caption{Diagonalize $(T_{G}, x)$}\label{alg0}
\begin{algorithmic}
\Require cotree $T_G$, scalar $\myvar$
\Ensure  diagonal matrix $D=[d_1, d_2, \ldots, d_n]$ congruent to $A(G) + \myvar I$
\State initialize $d_i := \myvar$, for $ 1 \leq i \leq n$

\While{$T_G$  has $\geq 2$    leaves }
   \State  select siblings $\{v_{k},v_{l}\}$ of maximum depth with parent $w$
   \State $\alpha \leftarrow  d_k$    $\beta \leftarrow d_{l}$
   \If{$ w=\otimes$}
      \If{$\alpha + \beta \neq2$} \verb+                //subcase 1a+
         \State $d_{l} \leftarrow \frac{\alpha \beta -1}{\alpha + \beta -2};$ \hspace*{0,25cm} $d_{k} \leftarrow \alpha + \beta -2; $\hspace{0,25cm}   $T_G = T_G - v_k$
       \ElsIf{ $\beta=1$} \verb+                //subcase 1b+
           \State $d_{l} \leftarrow 1$ \hspace*{0,25cm}   $d_k  \leftarrow 0;$ \hspace{0,25cm} $T_G = T_G - v_k$
         \Else \verb+                      //subcase 1c+
            \State $d_{l} \leftarrow 1$  \hspace*{0,25cm} $d_k \leftarrow -(1-\beta)^2;$ \hspace{0,25cm} $T_G= T_G -v_k;$ \hspace{0,25cm} $T_G = T_G -v_l$
       \EndIf

\ElsIf{$ w=\cup$}
      \If{$\alpha + \beta \neq 0$} \verb+                //subcase 2a+
         \State $d_{l} \leftarrow \frac{\alpha \beta }{\alpha + \beta};$ \hspace*{0,25cm} $d_{k} \leftarrow \alpha + \beta ; $\hspace{0,25cm}   $T_G = T_G - v_k$
       \ElsIf{ $\beta=0$} \verb+                //subcase 2b+
           \State $d_{l} \leftarrow 0$ \hspace*{0,25cm}   $d_k  \leftarrow 0;$ \hspace{0,25cm} $T_G = T_G - v_k$
         \Else \verb+                      //subcase 2c+
            \State $d_{l} \leftarrow \beta$  \hspace*{0,25cm} $d_k \leftarrow -\beta;$ \hspace{0,25cm} $T_G= T_G -v_k;$ \hspace{0,25cm} $T_G = T_G -v_l$
       \EndIf
\EndIf

\EndWhile

\end{algorithmic}
\end{algorithm}

We would like to point out that the Diagonalize$(T_{G},x)$ works bottom up since the cotree is represented in the same way. In this note we just work with threshold graphs so the cotrees we use are depicted in Figure \ref{cotree}. Therefore, throughout the text we represent the steps bottom up by steps from left to right.

In this article given a graph $G$ and a scalar $a\in\mathbb{R}$ we define the triple $(a_{+},a_{0},a_{-})$, where $a_{+}$ denotes the number of eigenvalues of $G$ that are greater than $a$, $a_{0}$ the multiplicity of $a$ and $a_{-}$ the number of eigenvalues of $G$ that are less than $a$. Therefore, the inertia of a graph $G$, using our notation, is the triple $(0_{+},0_{0},0_{-})$.
The following results presented in this section are from \cite{jacobs2018eigenvalue} and will be used throughout the note.
The next one shows that the Algorithm \ref{alg0} computes the triple $(a_{+},a_{0},a_{-})$.

\begin{teore}
\label{inertia}
  Let $D=[d_{1},d_{2},\ldots,d_{n}]$ be the diagonal returned by Diagonalize$(T_{G},-a)$, and assume $D$ has $a_{+}$ positive values, $a_{0}$ zeros and $a_{-}$ negative values. Then
  \begin{enumerate}
	\item[i:] The number of eigenvalues of $G$ that are greater than $a$ is exactly $a_{+}$.
    \item[ii:] The number of eigenvalues of $G$ that are less than $a$ is exactly $a_{-}$.
	\item[iii:] The multiplicity of $a$ is $a_{0}$ .
\end{enumerate}
\end{teore}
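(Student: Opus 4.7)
The plan is to deduce the theorem from Sylvester's law of inertia. Since the algorithm's output $D$ is claimed to be congruent to $A(G)+xI$ with $x=-a$, and congruent symmetric matrices have the same numbers of positive, zero, and negative eigenvalues, it suffices to translate the inertia of $A(G)+xI$ back to statements about $A(G)$: an eigenvalue $\mu$ of $A(G)+xI$ equals $\lambda-a$ for some eigenvalue $\lambda$ of $A(G)$, so a positive (resp.\ zero, negative) entry of $D$ corresponds to an eigenvalue of $A(G)$ that is greater than (resp.\ equal to, less than) $a$. Thus the three conclusions follow once congruence is established.

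To establish congruence, I would proceed by induction on the number of leaves of $T_G$, showing that after each pass of the while loop the remaining matrix (the block on the unremoved leaves, with the updated diagonal entries replacing the current $d_i$ values on already-processed leaves) is congruent to the matrix before the pass. The base case (a single leaf) is trivial since $D=[x]$ is already diagonal. For the inductive step, I would fix sibling leaves $v_k,v_l$ of maximum depth with parent $w$, and exploit the sibling structure: if $w=\otimes$ then $v_k,v_l$ are coduplicates so columns $k$ and $l$ of the working matrix agree off the positions $\{k,l\}$ and have off-diagonal $1$ between them; if $w=\cup$ then they are duplicates so columns $k$ and $l$ agree off $\{k,l\}$ and have off-diagonal $0$ between them.

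In both situations, a single elementary congruence — subtract column $l$ from column $k$ (and symmetrically row $l$ from row $k$) in the $\otimes$ case, or its obvious $\cup$ analogue — zeros every off-diagonal entry in column and row $k$ outside the $2\times 2$ block on $\{k,l\}$. That $2\times 2$ block is then $\begin{pmatrix}\alpha+\beta-2 & 1-\beta\\ 1-\beta & \beta\end{pmatrix}$ in the $\otimes$ case and $\begin{pmatrix}\alpha+\beta & -\beta\\ -\beta & \beta\end{pmatrix}$ in the $\cup$ case. In subcase 1a ($\alpha+\beta\ne 2$) and subcase 2a ($\alpha+\beta\ne 0$) the $(k,k)$ entry is a nonzero pivot, so a second elementary congruence zeros the remaining off-diagonal and the Schur-complement computation on the $2\times 2$ block produces exactly the updates $d_l \leftarrow (\alpha\beta-1)/(\alpha+\beta-2)$, $d_k\leftarrow \alpha+\beta-2$ (respectively $d_l\leftarrow \alpha\beta/(\alpha+\beta)$, $d_k\leftarrow \alpha+\beta$), leaving a matrix that on the remaining leaves agrees with Diagonalize's state.

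The main obstacle is handling the degenerate subcases 1b, 1c, 2b, 2c, where the pivot obtained after the first elementary congruence vanishes and Schur complement is unavailable. Here I would treat the $2\times 2$ block directly: in subcase 1b (and 2b) the block reduces to $\begin{pmatrix}0&0\\0&1\end{pmatrix}$ (resp.\ $\begin{pmatrix}0&0\\0&0\end{pmatrix}$), confirming the assignments $(d_k,d_l)=(0,1)$ and $(0,0)$; in subcase 1c (resp.\ 2c), the block becomes $\begin{pmatrix}0&1-\beta\\1-\beta&\beta\end{pmatrix}$ (resp.\ $\begin{pmatrix}0&-\beta\\-\beta&\beta\end{pmatrix}$) with $1-\beta\ne 0$ (resp.\ $\beta\ne 0$), and one verifies by a direct congruence (e.g.\ eliminating the $\beta$ via a row/column operation that exchanges nothing outside $\{k,l\}$ because the off-block entries are already zero) that this block is congruent to $\mathrm{diag}(-(1-\beta)^2,\,1)$ (resp.\ $\mathrm{diag}(-\beta,\,\beta)$), matching the algorithm's output. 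Combining the three stages — reduction to a $2\times 2$ block, diagonalization of that block (standard or degenerate), and the inductive hypothesis on the remaining cotree — yields the claimed congruence, and then Sylvester's law delivers the three statements of Theorem \ref{inertia}.
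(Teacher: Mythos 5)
This theorem is not proved in the paper: the authors state it (and Theorems \ref{teo4}--\ref{teo6}, Lemmas \ref{lem10}--\ref{lem11}) as cited results from \cite{jacobs2018eigenvalue}, so there is no in-paper argument to match against. That said, your route — establish that Diagonalize produces a matrix congruent to $A(G)-aI$ by tracking one pass of the while-loop as a product of elementary congruences, then invoke Sylvester's law of inertia and shift eigenvalues by $a$ — is the standard and correct way to prove the result, and your computations of the $2\times 2$ blocks after the first row/column operation, together with the Schur-complement verification of subcases~1a and~2a, are all correct.

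There is, however, a gap in your treatment of subcases~1c and~2c, precisely where you flag the main obstacle. In those two subcases the algorithm deletes \emph{both} $v_k$ and $v_l$, so you must isolate both row/column $k$ \emph{and} row/column $l$ from the rest of the matrix before you are entitled to replace the $2\times 2$ block on $\{k,l\}$ by a $2\times 2$ diagonal of matching inertia. After the first congruence (subtracting row/column $l$ from row/column $k$), row/column $k$ is indeed decoupled, but row/column $l$ still carries all the original adjacency entries of $v_l$ to the rest of the cotree, so the statement ``the off-block entries are already zero'' is not correct for $l$. Your proposed operation (clearing the $(l,l)$-entry by adding a multiple of row/column $k$ to row/column $l$) does act only within the $\{k,l\}$ block, but it does nothing toward removing the coupling of $l$ to the remaining vertices, and hence does not leave a block-diagonal matrix. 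The missing step is a second round of eliminations using the zero-pivot row/column $k$: since $(k,k)=0$ and $(k,l)=1-\beta\neq 0$ (resp.\ $-\beta\neq 0$) while every other entry in row/column $k$ is already zero, adding $-M_{lj}/M_{kl}$ times row/column $k$ to row/column $j$ for each $j\neq k,l$ zeros $M_{lj}$ and $M_{jl}$ without altering any $(j,j')$, $(k,j)$, or $(l,l)$ entry. Only after this is the $2\times 2$ block $\bigl(\begin{smallmatrix}0 & 1-\beta\\ 1-\beta & \beta\end{smallmatrix}\bigr)$ genuinely decoupled, and then — since its determinant $-(1-\beta)^2$ is negative — it has inertia $(1,0,1)$ and may be replaced by $\operatorname{diag}\bigl(-(1-\beta)^2,1\bigr)$ (resp.\ $\operatorname{diag}(-\beta,\beta)$) as the algorithm prescribes. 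With this added step your induction closes and the rest of the argument goes through.
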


The following three results show that we can obtain information about the localization of certain eigenvalues of the cograph $G$ just by analysing its associated cotree $T_{G}$.

\begin{teore}
\label{teo4}
  Let $G$ be a cograph with cotree $T_{G}$ having $\otimes$-nodes $\{w_{1},\ldots,w_{j}\}$, and assume each $w_{i}$ has $t_{i}$ children in $T_{G}$. Then $$n_{-}(G)=\sum_{i=1}^{j}(t_{i}-1).$$
\end{teore}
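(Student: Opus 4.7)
The plan is to invoke Theorem \ref{inertia} with $a = 0$, so that $n_-(G)$ equals the number of strictly negative entries in the diagonal $D$ returned by Diagonalize$(T_G, 0)$. Since every leaf starts with $d_i = 0$, I would analyse the algorithm by induction on the cotree, tracking the ``surviving value'' $v_w$ left after all of $w$'s children have been collapsed, along with the number of negative entries generated during that collapse. The claim is then that each $\otimes$-node $w_i$ contributes exactly $t_i - 1$ negatives, while each $\cup$-node contributes none; summing gives the formula.

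The central invariant that makes the counting clean is $v_w \in [0, 1)$ for every internal node $w$. This prevents the ``degenerate'' subcases (1b, 1c, 2c) from ever firing: at a $\cup$-node, child values in $[0,1)$ are non-negative, so $\alpha + \beta = 0$ forces $\alpha = \beta = 0$ (subcase 2b), and otherwise we land in 2a; at an $\otimes$-node, $\alpha + \beta < 2$ keeps us in 1a. A short computation shows that the new surviving value stays in $[0,1)$ in each of these cases -- for instance, at an $\otimes$-combine the resulting value $(1 - \alpha\beta)/(2 - \alpha - \beta)$ lies in $(0,1)$ because $(1-\alpha)(1-\beta) > 0$ -- so the invariant propagates up through the recursion starting from leaves, which are $0$. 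With the invariant in hand, the counting is mechanical: a $\cup$-combine produces $d_k = \alpha + \beta \geq 0$ (2a) or $d_k = 0$ (2b), whereas an $\otimes$-combine in subcase 1a produces $d_k = \alpha + \beta - 2 < 0$, and an $\otimes$-node with $t_i$ children undergoes exactly $t_i - 1$ such combines.

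The step I expect to require the most care is the inductive bookkeeping across the whole tree. The algorithm selects siblings of maximum depth, so processing interleaves children of different internal nodes, and after all but one of an internal node's children have been eliminated the residual leaf has to be treated as sitting at the parent's level. One must verify that this interleaving neither breaks the $[0,1)$ invariant nor miscounts any combine operation. The cleanest way I see is to induct on the cotree: the base case is a single leaf (contributing no negatives), and the inductive step first processes each subtree of the root to a single value in $[0,1)$ (by the inductive hypothesis, contributing $\sum (t_i - 1)$ negatives from the $\otimes$-nodes inside that subtree) and then combines those values at the root, contributing $t_r - 1$ more negatives if the root is $\otimes$ and zero otherwise. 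This yields $n_-(G) = \sum_{i=1}^j (t_i - 1)$.
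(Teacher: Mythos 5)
Your proof is correct. Note first that the paper does not actually prove Theorem~\ref{teo4}: it is imported verbatim from the reference \cite{jacobs2018eigenvalue} and stated without argument, so there is no in-paper proof to compare against. That said, the route you take is the natural one and, to my knowledge, essentially the one used in the cited source: apply Theorem~\ref{inertia} with $a=0$, establish by induction on the cotree that every surviving value sits in $[0,1)$, and then read off that each $\otimes$-combine lands in subcase 1a and contributes exactly one negative permanent entry $d_k=\alpha+\beta-2<0$, while each $\cup$-combine lands in subcase 2a or 2b and contributes a non-negative permanent entry. Your verification that the degenerate subcases 1b, 1c, 2c never fire under the invariant is exactly the key observation, and the computation that $(1-\alpha\beta)/(2-\alpha-\beta)\in(0,1)$ follows from $(1-\alpha)(1-\beta)>0$ is correct; the $\cup$-survivor $\alpha\beta/(\alpha+\beta)$ likewise stays in $[0,1)$ since it is bounded by $\min(\alpha,\beta)$. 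On the bookkeeping point you flagged: the maximum-depth selection rule does interleave operations across subtrees at the same depth, but this is harmless because collapses under distinct internal nodes are independent of one another until a subtree reduces to a single survivor, which is then promoted to the parent; your subtree-by-subtree induction therefore gives the same count as the actual execution order, and a $\otimes$-node with $t_i$ children (leaf or internal) always undergoes exactly $t_i-1$ combines. The only cosmetic remark is that the $\otimes$-survivor actually lies in the open interval $(0,1)$, strictly positive, but $[0,1)$ suffices for the counting.
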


\begin{teore}
\label{teo5}
  Let $G$ be a cograph with cotree $T_{G}$ having $\cup$-nodes $\{w_{1},\ldots,w_{m}\}$, where $w_{i}$ has $t_{i}>0$ terminal children. If $G$ has $j\geq 0$ isolated vertices, then
$$n_{0}(G)=j+\sum_{i=1}^{m}(t_{i}-1).$$
\end{teore}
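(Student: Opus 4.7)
The plan is to apply Algorithm Diagonalize to $(T_G, 0)$ and invoke Theorem~\ref{inertia}(iii), which identifies $n_0(G)$ with the number of zero entries in the returned diagonal $D$. I would count those zeros by tracking exactly when each is written and never overwritten. A quick inspection of the six subcases shows that the removed entry $d_k$ is set to $0$ only in subcase~1b (which requires $\alpha=\beta=1$ at an $\otimes$-node) or subcase~2b (which requires $\alpha=\beta=0$ at a $\cup$-node); in every other subcase the value assigned to $d_k$ is nonzero, and any incidental assignment $d_l=0$ survives in $D$ only if $v_l$ happens to be the final leaf of the whole run.

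The next step is the bookkeeping for subcase~2b. At each $\cup$-node $w_i$ with $t_i$ non-isolated terminal children, all $t_i$ of those children enter the processing with $d=0$, so pairing them up forces exactly $t_i-1$ invocations of subcase~2b, each contributing one zero to $D$ and leaving a surviving representative whose current value is still $0$. Merging that $0$-representative with the non-terminal ($\otimes$-rooted) siblings at $w_i$ falls into subcase~2a, yielding $d_l=0$ and a nonzero $d_k$, so no additional zero is added at $w_i$. The $j$ isolated vertices of $G$ are themselves terminal children of the root $\cup$-node; the analogous pairwise merges produce $j-1$ zeros via subcase~2b, and the surviving leaf ultimately becomes the final leaf of the whole algorithm, still holding $d=0$ and supplying the $j$-th zero. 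Summing, $D$ contains exactly $j+\sum_{i=1}^{m}(t_i-1)$ zeros, which by Theorem~\ref{inertia}(iii) equals $n_0(G)$.

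The main obstacle is to justify two auxiliary claims: that subcase~1b is never executed, and that the representative emerging from any non-terminal subtree never contributes a stray zero to the bookkeeping above. Both follow from a dichotomy proved by mutual induction on cotree depth: an $\otimes$-rooted subtree always yields a nonzero surviving value (when its children are themselves $0$, the iterated subcase~1a drives the surviving value through $\tfrac{1}{2},\tfrac{2}{3},\tfrac{3}{4},\ldots$, never hitting $0$ or $1$), while a $\cup$-rooted subtree with at least one terminal child always yields the surviving value $0$. Propagating these invariants cleanly through the alternation of $\otimes$- and $\cup$-levels, and handling the edge case of $\cup$-subtrees with no terminal children via subcase~2a, is the technical heart of the argument.
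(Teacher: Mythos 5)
The paper does not prove Theorem~\ref{teo5}: it is quoted, together with Theorems~\ref{inertia}, \ref{teo4}, \ref{teo6} and Lemmas~\ref{lem10}--\ref{lem11}, from \cite{jacobs2018eigenvalue}, so there is no in-paper proof against which to compare your attempt. Assessing it on its own terms: the overall strategy (run Diagonalize$(T_G,0)$, invoke Theorem~\ref{inertia}(iii), and show that subcase~2b is the only source of permanent zeros, contributing exactly $t_i-1$ of them at each $\cup$-node with $t_i$ terminal children) is the natural one, but two steps are not yet sound.

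First, the qualitative dichotomy you propose --- $\otimes$-representatives are nonzero, $\cup$-representatives with a terminal child are $0$ --- is too weak to close the induction. You need the quantitative version: every $\otimes$-representative lies in $(0,1)$ and every $\cup$-representative lies in $[0,1)$. The upper bound $<1$ is essential: it guarantees at every $\otimes$-node that $\alpha+\beta<2$, so subcases~1b and~1c never fire; it guarantees at every $\cup$-node that $\alpha+\beta\geq 0$ with equality only when both values vanish, so subcase~2c never fires; and it rules out $\alpha\beta=1$, so subcase~1a never accidentally outputs $d_l=0$. The bound does propagate, since $g(\alpha,\beta)=\frac{\alpha\beta}{\alpha+\beta}\leq\min(\alpha,\beta)$ for $\alpha,\beta\geq 0$ not both zero and $f(\alpha,\beta)=\frac{\alpha\beta-1}{\alpha+\beta-2}\in(0,1)$ for $\alpha,\beta\in[0,1)$ because $(1-\alpha)(1-\beta)>0$; so your sketch is repairable. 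But as written this propagation is the real technical heart, not merely the $\cup$-without-terminals edge case you flag.

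Second, your bookkeeping for isolated vertices silently reads $t_i$ as the number of \emph{non-isolated} terminal children of $w_i$. Under the literal statement, where $t_i$ counts all terminal children, the isolated vertices are counted twice --- once as $j$ and once inside $t_1-1$ for the root $\cup$-node --- and the formula fails for disconnected $G$: for $G=K_2\cup\overline{K_2}$ one has $n_0(G)=2$ while $j+\sum(t_i-1)=2+1=3$. Your reading is surely what \cite{jacobs2018eigenvalue} intends, and the present paper only ever applies the theorem to connected threshold graphs where $j=0$, but it is a silent correction of the hypothesis rather than a derivation from it and should be made explicit.
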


\begin{teore}
\label{teo6}
  Let $G$ be a cograph with cotree $T_{G}$ having $\otimes$-nodes $\{w_{1},\ldots,w_{m}\}$, where $w_{i}$ has $t_{i}>0$ terminal children. Then the multiplicity of $-1$ is
     $$\sum_{i=1}^{m}(t_{i}-1).$$
\end{teore}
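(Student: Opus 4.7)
The plan is to invoke Theorem \ref{inertia} with $-a = 1$: the diagonal $D$ returned by Diagonalize$(T_G, 1)$ has a number of zero entries equal to the multiplicity of $-1$ in the spectrum of $A(G)$. It then suffices to count the zeros produced by the algorithm when every leaf starts with $d_i = 1$.

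First I would process the cotree bottom-up and focus on the terminal children of each $\otimes$-node $w_i$ with $t_i > 0$. All such leaves carry value $1$, so any pairing has $\alpha = \beta = 1$, triggering subcase 1b: this sets $d_k = 0$, $d_l = 1$, and removes $v_k$. Iterating $t_i - 1$ times produces exactly $t_i - 1$ zeros and leaves one residual leaf with value $1$ at $w_i$. Summed over $\otimes$-nodes with at least one terminal child, this accounts for $\sum_{i=1}^{m}(t_i - 1)$ zeros.

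The main obstacle is to verify that no further zeros arise from the remaining pairings---those involving residuals percolating up from $\cup$-subtrees or from $\otimes$-subtrees without terminal children. I would prove this by structural induction on the reduced cotree, maintaining the invariant that the residual at the representative leaf of any processed subtree lies in $(0, 1]$ and equals $1$ exactly when the subtree root is either a single leaf or a $\otimes$-node with at least one terminal child. Under this invariant, every $\otimes$-node pairing has $\alpha, \beta \in (0, 1]$ with $\alpha + \beta \leq 2$; the case $\alpha + \beta = 2$ forces $\alpha = \beta = 1$ and falls into subcase 1b, already counted, while $\alpha + \beta < 2$ falls into subcase 1a, where the identity
\[
1 - d_l \;=\; \frac{(1 - \alpha)(1 - \beta)}{2 - \alpha - \beta}
\]
shows $d_l \in (0, 1]$ and never equal to $0$. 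Likewise, every $\cup$-node pairing with positive inputs falls into subcase 2a with $d_l = \alpha\beta/(\alpha + \beta) > 0$, producing no zero.

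Verifying the invariant itself is a routine induction: a $\otimes$-node with terminal children leaves residual $1$ because after the terminal reductions the current value is $1$, and absorbing each $\cup$-residual $r \in (0, 1)$ yields $(r - 1)/(r - 1) = 1$; a $\cup$-node or a $\otimes$-node without terminal children leaves a residual strictly in $(0, 1)$, which follows from the displayed identity together with the elementary bound $\alpha\beta/(\alpha + \beta) < 1$ for $\alpha, \beta \in (0, 1]$. Once the invariant is established, the total zero count is exactly $\sum_{i=1}^{m}(t_i - 1)$, and Theorem \ref{inertia} yields the claim.
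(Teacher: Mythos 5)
Your proof is correct. Since the paper only cites Theorem~\ref{teo6} from \cite{jacobs2018eigenvalue} without reproducing a proof, there is no in-paper argument to compare against, but your route is exactly the one the paper's machinery is built to support: apply Diagonalize$(T_G,1)$, invoke Theorem~\ref{inertia} with $-a=1$, and count zeros in the output diagonal.

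The invariant you state --- that every residual lies in $(0,1]$, equals $1$ precisely for leaves and for $\otimes$-nodes with at least one terminal child, and lies strictly in $(0,1)$ otherwise --- is correct, and your verification via the identity $1-d_l=\frac{(1-\alpha)(1-\beta)}{2-\alpha-\beta}$ at $\otimes$-nodes and the bound $\alpha\beta/(\alpha+\beta)<1$ at $\cup$-nodes is exactly what is needed to show that the only source of zeros is subcase~1b, triggered when two value-$1$ entries meet at a $\otimes$-node. One point worth making explicit: your count of $t_i-1$ zeros at $w_i$ fixes a processing order (terminal leaves paired among themselves first, then percolated $\cup$-residuals absorbed). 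The Diagonalize algorithm does not prescribe the order of pairings among same-depth siblings, so another run could interleave $1$'s with $\cup$-residuals. This is harmless --- the inertia of the output diagonal is order-independent, which is precisely what Theorem~\ref{inertia} guarantees, so your particular scheduling is taken without loss of generality --- but the appeal to order-invariance should be stated rather than left implicit, especially since the invariant concerning the residual value (which you do establish for all orders) does not by itself pin down how many subcase~1b events occur.
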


The next two lemmas provide an alternative initialization of the algorithm i.e.,  first we perform assignments to the leaves with identical value of the cotree and then we move on with the specialized cotree.

\begin{lemat}
\label{lem10}
If $v_{1},\ldots,v_{m}$ have parent $w=\otimes$, each with diagonal value $y\neq 1$, then the algorithm performs $m-1$ iterations of \subia, assigning during iteration $j$:
\begin{align}
    d_{k} & \leftarrow\frac{j+1}{j}(y-1) \label{crossperm}\\
    d_{l} & \leftarrow\frac{y+j}{j+1}.   \label{crossrem}
\end{align}
\end{lemat}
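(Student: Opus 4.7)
The strategy is induction on the iteration index $j$, maintaining the invariant that after iteration $j$ the sibling group $\{v_1,\ldots,v_m\}$ contains exactly one modified leaf with diagonal value $\frac{y+j}{j+1}$ together with $m-1-j$ leaves still carrying the initial value $y$. The assumption $y \neq 1$ will be used at every step to ensure the algorithm triggers \subia{} rather than \subib{} or \subic{}.

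For the base case $j=1$, both selected siblings begin with value $y$, so $\alpha = \beta = y$ gives $\alpha+\beta-2 = 2(y-1)\neq 0$, so \subia{} fires; direct substitution into its formulas yields $d_k = 2(y-1)$ and $d_l = \tfrac{y+1}{2}$, reproducing (\ref{crossperm}) and (\ref{crossrem}) at $j=1$.

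For the inductive step, under the invariant at step $j$, the algorithm in step $j+1$ pairs the surviving leaf (with $\alpha = \frac{y+j}{j+1}$) with one of the still-untouched ones (with $\beta = y$). A short computation gives
\[
\alpha+\beta-2 \;=\; \frac{(j+2)(y-1)}{j+1},
\]
which is nonzero, so \subia{} is again invoked and $d_k = \alpha+\beta-2 = \tfrac{(j+1)+1}{j+1}(y-1)$ matches (\ref{crossperm}) at index $j+1$. The crucial step is the $d_l$ update: one needs the factorization
\[
\alpha\beta - 1 \;=\; \frac{y^{2}+jy-(j+1)}{j+1} \;=\; \frac{(y-1)(y+j+1)}{j+1},
\]
after which the $(y-1)$ in the numerator cancels the one appearing in $\alpha+\beta-2$, leaving $d_l = \frac{y+j+1}{j+2}$, which is exactly (\ref{crossrem}) at $j+1$ and restores the invariant.

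The only genuinely non-routine ingredient is noticing the factorization $y^{2}+jy-(j+1)=(y-1)(y+j+1)$; everything else is bookkeeping. After $m-1$ iterations only one leaf in the group remains, so the described sequence of steps terminates, completing the argument.
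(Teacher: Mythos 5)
Your induction is correct: the base case and the inductive step both check out, and the factorization $y^2+jy-(j+1)=(y-1)(y+j+1)$ is indeed the one piece of algebra that makes the remaining value collapse cleanly to $\frac{y+j+1}{j+2}$. One small caveat worth stating explicitly (you rely on it implicitly via your invariant): you must assume the algorithm at each iteration pairs the current \emph{survivor} leaf with a fresh leaf still holding the initial value $y$, since a different pairing order under the same $\otimes$-node would produce a different (though inertia-equivalent) diagonal; the formula in the lemma presupposes this selection convention.

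Note, however, that this paper does not prove Lemma~\ref{lem10} at all — it is quoted verbatim from the reference on the Diagonalize algorithm for cographs and used as a black box. So there is no proof in the paper to compare against; your argument is essentially the standard one-step induction that one would give for this fact, and it is correct as written modulo the selection-order assumption above.
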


\begin{lemat}
\label{lem11}
If $v_{1},\ldots,v_{m}$ have parent $w=\cup$, each with diagonal value $y\neq 0$, then the algorithm performs $m-1$ iterations of \subiia, assigning during iteration $j$:
\begin{align}
d_{k} & \leftarrow\frac{j+1}{j}(y) \label{cupperm}\\
d_{l} &\leftarrow\frac{y}{j+1}.    \label{cuprem}
\end{align}

\end{lemat}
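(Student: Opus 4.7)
The plan is to prove Lemma \ref{lem11} by induction on the iteration index $j$, closely paralleling the argument implicit in Lemma \ref{lem10} but using the $\cup$-node update rule of subcase 2a instead of the $\otimes$-node rule of subcase 1a. The key observation is that the hypothesis $y\neq 0$ guarantees that $\alpha+\beta$ never vanishes, so subcase 2a is indeed the branch taken at each of the $m-1$ iterations.

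For the base case $j=1$, two fresh leaves carrying value $y$ are paired, so $\alpha=\beta=y$ and $\alpha+\beta=2y\neq 0$. A direct substitution in subcase 2a gives
\begin{equation*}
d_k \leftarrow 2y = \tfrac{2}{1}y,\qquad d_l \leftarrow \frac{y^2}{2y}=\frac{y}{2},
\end{equation*}
matching (\ref{cupperm}) and (\ref{cuprem}). For the inductive step, assume that after iteration $j-1$ the surviving vertex at $w$ (the one that played the role of $v_l$) has diagonal value $\frac{y}{j}$, while $m-1-j$ fresh leaves still carry value $y$. At iteration $j$ the algorithm pairs this surviving vertex with any one of the fresh ones, giving $\alpha=y$ and $\beta=\frac{y}{j}$. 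Then $\alpha+\beta = y\cdot\frac{j+1}{j}$, which is nonzero by the hypothesis $y\neq 0$, so subcase 2a applies; and the updates yield
\begin{equation*}
d_k \leftarrow \alpha+\beta = \frac{j+1}{j}\,y,\qquad d_l \leftarrow \frac{\alpha\beta}{\alpha+\beta} = \frac{y^2/j}{y(j+1)/j} = \frac{y}{j+1},
\end{equation*}
which is exactly (\ref{cupperm})--(\ref{cuprem}) and re-establishes the inductive hypothesis for the next round.

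There is no real obstacle beyond the algebraic verification above; the only subtle point is ensuring that the pairing chosen by the algorithm at each iteration is consistent with the inductive pattern (surviving leaf paired with a fresh leaf of value $y$). Since subcase 2a merely requires selecting two siblings of maximum depth sharing parent $w$, and since after iteration $j-1$ all remaining vertices at $w$ are such siblings, any valid choice produces the same assignments up to relabeling $k\leftrightarrow l$ (the formulas are symmetric in $\alpha,\beta$). Hence the induction carries through for all $j=1,\ldots,m-1$, exhausting the $m$ original vertices after $m-1$ iterations of subcase 2a, and the lemma follows.
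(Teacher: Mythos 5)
The paper does not actually prove Lemma~\ref{lem11}; it is quoted, together with Lemma~\ref{lem10} and Theorems~\ref{inertia}--\ref{teo6}, directly from \cite{jacobs2018eigenvalue}, so there is no in-paper argument to compare against. Your induction is nonetheless the right approach and the arithmetic checks out: with accumulated value $y/j$ and a fresh leaf carrying $y$, the $\cup$-rule gives $\alpha+\beta=\tfrac{j+1}{j}y\neq0$ because $y\neq0$, so subcase~2a applies and yields exactly the claimed $d_k\leftarrow\tfrac{j+1}{j}y$ and $d_l\leftarrow\tfrac{y}{j+1}$. (A small miscount: after iteration $j-1$ there are $m-j+1$ leaves left, so $m-j$ fresh ones, not $m-1-j$.)

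The genuine gap is the closing claim that ``any valid choice produces the same assignments up to relabeling.'' The symmetry of the subcase~2a update in $\alpha$ and $\beta$ only says that swapping the two members of a given pair is harmless; it does not make the run independent of which pair gets selected. For $m=4$, pairing the two fresh leaves at iteration~2 (instead of the accumulated leaf with a fresh one) produces permanent values $2y,\,2y,\,y$ rather than the $2y,\,\tfrac{3}{2}y,\,\tfrac{4}{3}y$ asserted by the lemma; the signs, and hence the inertia counts, still agree, but the explicit formulas do not. Your induction actually relies on the convention that the surviving leaf $v_l$ is always chosen again and paired with the next fresh sibling, producing the chain $\tfrac{y}{2},\tfrac{y}{3},\ldots,\tfrac{y}{m}$ --- which is what the algorithm in \cite{jacobs2018eigenvalue} does and what the phrase ``during iteration $j$'' in the lemma presupposes. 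State that convention explicitly rather than appealing to symmetry, and the induction closes cleanly.
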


One advantage of the algorithm is that it is applied directly in the cotree so the diagonal values processed during the iterations will be labeled in the vertices. As pointed out before, the algorithm progresses bottom up but in this article we work only with threshold  graphs so we will represent these iterations from left to right. The next example elucidates these observations.

\begin{Example}
\label{example1}
We will apply Diagonalize to the cograph $G$ with cotree $T_{G}(2,3,4)$ and $x=-1$. Since we process the algorithm directly in the cotree, the diagonal values $d_{i}$'s will appear at the terminal vertices $v_{i}$'s during the execution. Each leaf is initialized with the value $d_{i}=-1$ as in Figure \ref{example}. Since multiple leaves of the same parent ($\otimes$ or $\cup$) have the same diagonal value $y=-1$ at the initialization, we can begin performing assignments to the terminal children (leaves) of the cotree using Lemmas \ref{lem10} and \ref{lem11} as follows.

\begin{figure}[H]
\centering
\begin{tikzpicture}
\path(0.7,1)node[shape=circle,draw,label=above:$3$,inner sep=0] (pri) {$\otimes$}
     (0,0)node[shape=circle,draw,fill=black,label=below:$-1$] (pri1) {}
     (0.5,0)node[shape=circle,draw,fill=black,label=below:$-1$] (pri2) {}
     (1,0)node[shape=circle,draw,fill=black,label=below:$-1$] (pri3) {}
     (1.5,0)node[shape=circle,draw,fill=black,label=below:$-1$] (pri4) {}

     ( 2.5,1)node[shape=circle,draw,label=above:$2$,inner sep=0] (sec) {$\cup$}
     ( 2,0)node[shape=circle,draw,fill=black,label=below:$-1$] (sec1) {}
     ( 2.5,0)node[shape=circle,draw,fill=black,label=below:$-1$] (sec2) {}
     ( 3,0)node[shape=circle,draw,fill=black,label=below:$-1$] (sec3) {}

     ( 4,1)node[shape=circle,draw,label=above:$1$,inner sep=0] (ter) {$\otimes$}
     ( 3.75,0)node[shape=circle,draw,fill=black,label=below:$-1$] (ter1) {}
     ( 4.25,0)node[shape=circle,draw,fill=black,label=below:$-1$] (ter2) {};

      \draw[-](pri)--(sec);
      \draw[-](pri)--(pri1);
      \draw[-](pri)--(pri2);
       \draw[-](pri)--(pri3);
      \draw[-](pri)--(pri4);

      \draw[-](sec)--(sec3);
      \draw[-](sec)--(sec1);
      \draw[-](sec)--(sec2);

      \draw[-](ter)--(ter1);
      \draw[-](ter)--(ter2);
      \draw[-](ter)--(sec);

\end{tikzpicture}
\caption{\label{example} Diagonalize$(T_{G}(2,3,4),-1)$.}
\end{figure}
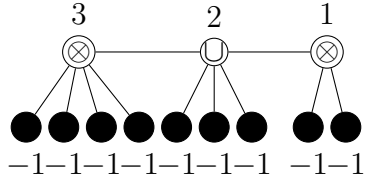

At depth $3$, there are $4$ leaves all with value $y=-1$, and using Lemma \ref{lem10}, $3$ iterations are performed. According to equation (\ref{crossperm}), $-4$, $-3$, $\frac{-8}{3}$ are the permanent diagonal values assigned. And, by (\ref{crossrem}), the final remaining value is  $$d_{l}\leftarrow\frac{1}{2}.$$
Figure \ref{example1n} depicts the cotree after Lemma \ref{lem10} being applied at $\otimes$-nodes at depth $1$ and $3$, and  Lemma \ref{lem11} at $\cup$-node at depth $2$.

\begin{figure}[H]
\centering
\begin{tikzpicture}
\path(1.5,1)node[shape=circle,draw,label=above:$3$,inner sep=0] (pri) {$\otimes$}
     (0,0)node[shape=circle,draw,fill=black,label=below:$-4$] (pri1) {}
     (1,0)node[shape=circle,draw,fill=black,label=below:$-3$] (pri2) {}
     (2,0)node[shape=circle,draw,fill=black,label=below:$\frac{-8}{3}$] (pri3) {}
     (3,0)node[shape=circle,draw,fill=red,label=below:$\frac{1}{2}$] (pri4) {}

     ( 6,1)node[shape=circle,draw,label=above:$2$,inner sep=0] (sec) {$\cup$}
     ( 5,0)node[shape=circle,draw,fill=black,label=below:$-2$] (sec1) {}
     ( 6,0)node[shape=circle,draw,fill=black,label=below:$\frac{-3}{2}$] (sec2) {}
     ( 7,0)node[shape=circle,draw,fill=red,label=below:$\frac{-1}{3}$] (sec3) {}

     ( 9.5,1)node[shape=circle,draw,label=above:$1$,inner sep=0] (ter) {$\otimes$}
     ( 9,0)node[shape=circle,draw,fill=black,label=below:$-4$] (ter1) {}
     ( 10,0)node[shape=circle,draw,fill=red,label=below:$0$] (ter2) {};

      \draw[-](pri)--(sec);

      \draw[-](pri)--(pri4);

      \draw[-](sec)--(sec3);

      \draw[-](ter)--(ter2);
      \draw[-](ter)--(sec);

\end{tikzpicture}
\caption{\label{example1n} Specialized Cotree $T_{G}(2,3,4)$.}
\end{figure}
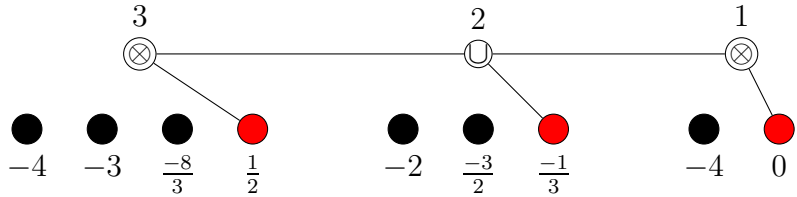

Notice that the vertices associated to permanent values are removed and we proceed with the cotree with the remaining vertices and its values as depicted in Figure
\ref{example2} left. The last vertex with value $\frac{1}{2}$ is relocated to the  next level as in Figure \ref{example2} right.

\begin{figure}[H]
\centering
\begin{tikzpicture}
\path(0,1)node[shape=circle,draw,label=above:$3$,inner sep=0] (pri) {$\otimes$}
     (0,0)node[shape=circle,draw,fill=red,label=below:$\frac{1}{2}$] (pri1) {}

     ( 1,1)node[shape=circle,draw,label=above:$2$,inner sep=0] (sec) {$\cup$}
     ( 1,0)node[shape=circle,draw,fill=red,label=below:$\frac{-1}{3}$] (sec1) {}

     ( 2,1)node[shape=circle,draw,label=above:$1$,inner sep=0] (ter) {$\otimes$}
     ( 2,0)node[shape=circle,draw,fill=red,label=below:$0$] (ter1) {};

      \draw[-](pri)--(sec);
      \draw[-](pri)--(pri1);

      \draw[-](sec)--(sec1);

      \draw[-](ter)--(ter1);

      \draw[-](ter)--(sec);

       \node[text width=6cm, anchor=west, right] at (4,0.5) {$\rightarrow$};

      \path(6.5,1)node[shape=circle,draw,label=above:$2$,inner sep=0] (sec) {$\cup$}
     ( 6,0)node[shape=circle,draw,fill=red,label=below:$\frac{1}{2}$] (sec1) {}
     ( 7,0)node[shape=circle,draw,fill=red,label=below:$\frac{-1}{3}$] (sec2) {}

     ( 8,1)node[shape=circle,draw,label=above:$1$,inner sep=0] (ter) {$\otimes$}
     ( 8,0)node[shape=circle,draw,fill=red,label=below:$0$] (ter1) {};

      \draw[-](sec)--(ter);
      \draw[-](sec)--(sec1);
      \draw[-](sec)--(sec2);

      \draw[-](ter)--(ter1);

\end{tikzpicture}
\caption{\label{example2} Applying \subiia.}
\end{figure}

Select the sibling pair $\{v_{k},v_{l}\}$ labeled with $d_{k}=\frac{1}{2}$ and $d_{l}=\frac{-1}{3}$ at the $\cup$-node at depth $2$ and initialize $\alpha\leftarrow d_{k}=\frac{1}{2}$ and $\beta\leftarrow d_{l}=\frac{-1}{3}$ as illustrated in Figure \ref{example2} right. Then \subiia~is executed and the assignments $d_{k}\leftarrow\frac{1}{6}$ and $d_{l}\leftarrow -1$ are made as shown in Figure \ref{example3} left. Then, the remaining vertex is relocated to the next level as in Figure \ref{example3} right.

\begin{figure}[H]
\centering
\begin{tikzpicture}
\path(1,1)node[shape=circle,draw,label=above:$1$,inner sep=0] (pri) {$\cup$}
     (1,0)node[shape=circle,draw,fill=red,label=below:$-1$] (pri1) {}
     (0,0)node[shape=circle,draw,fill=black,label=below:$\frac{1}{6}$] (pri2) {}

     ( 2,1)node[shape=circle,draw,label=above:$1$,inner sep=0] (sec) {$\otimes$}
     ( 2,0)node[shape=circle,draw,fill=red,label=below:$0$] (sec1) {};

      \draw[-](pri)--(sec);

      \draw[-](pri)--(pri1);

      \draw[-](sec)--(sec1);

      \node[text width=6cm, anchor=west, right] at (4,0.5) {$\rightarrow$};

      \path(6.5,1)node[shape=circle,draw,label=above:$2$,inner sep=0] (ter) {$\otimes$}
     ( 6,0)node[shape=circle,draw,fill=red,label=below:$-1$] (ter1) {}
     ( 7,0)node[shape=circle,draw,fill=red,label=below:$0$] (ter2) {};

     \draw[-](ter)--(ter1);
    \draw[-](ter)--(ter2);

\end{tikzpicture}
\caption{\label{example3} Applying \subia.}
\end{figure}

Now, as $\alpha=-1$ and $\beta=0$, \subia~is executed at the last iteration and the assignments $d_{k}\leftarrow -3$ and $d_{l}\leftarrow \frac{1}{3}$ are made as shown in Figure \ref{example4}.

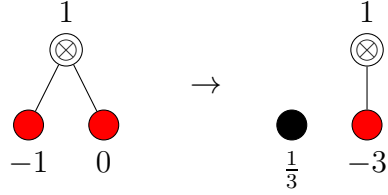
\begin{figure}[H]
\centering
\begin{tikzpicture}
\path(0.5,1)node[shape=circle,draw,label=above:$1$,inner sep=0] (pri) {$\otimes$}
     (0,0)node[shape=circle,draw,fill=red,label=below:$-1$] (pri1) {}
     (1,0)node[shape=circle,draw,fill=red,label=below:$0$] (pri2) {};

       \draw[-](pri)--(pri1);
        \draw[-](pri)--(pri2);

      \node[text width=6cm, anchor=west, right] at (2,0.5) {$\rightarrow$};

      \path(4.5,1)node[shape=circle,draw,label=above:$1$,inner sep=0] (ter) {$\otimes$}
     ( 4.5,0)node[shape=circle,draw,fill=red,label=below:$-3$] (ter1) {}
     ( 3.5,0)node[shape=circle,draw,fill=black,label=below:$\frac{1}{3}$] (ter2) {};

     \draw[-](ter)--(ter1);

\end{tikzpicture}
\caption{\label{example4} Final remaining value.}
\end{figure}

The algorithm stops, and the final diagonal is formed by the permanent values and the last remaining value as shown in Figure \ref{example5}. Therefore, according to Theorem \ref{inertia} $1_{+}=2$, $1_{-}=7$ and $1_{0}=0$.

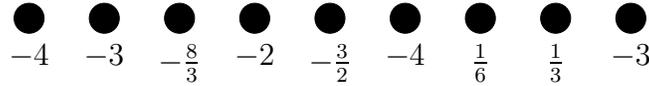
\begin{figure}[H]
\centering
\begin{tikzpicture}
\path(0,0)node[shape=circle,draw,fill=black,label=below:$-4$] (1) {}
     (1,0)node[shape=circle,draw,fill=black,label=below:$-3$] (2) {}
     (2,0)node[shape=circle,draw,fill=black,label=below:$-\frac{8}{3}$] (3) {}
     (3,0)node[shape=circle,draw,fill=black,label=below:$-2$] (4) {}
     (4,0)node[shape=circle,draw,fill=black,label=below:$-\frac{3}{2}$] (5) {}
     (5,0)node[shape=circle,draw,fill=black,label=below:$-4$] (6) {}
     (6,0)node[shape=circle,draw,fill=black,label=below:$\frac{1}{6}$] (7) {}
     (7,0)node[shape=circle,draw,fill=black,label=below:$\frac{1}{3}$] (8) {}
     (8,0)node[shape=circle,draw,fill=black,label=below:$-3$] (9) {};

\end{tikzpicture}
\caption{\label{example5} Final diagonal.}
\end{figure}

\end{Example}

\section{Interval $(0,N]$}
\label{Inertia0}

Given a real number $N>0$ our method consists in constructing a threshold graph $G$ with cotree $T_{G}(a_{1},\, a_{2},\, \ldots,\, a_{r})$, by choosing each $a_{i}\geq 1$ for $1\leq i\leq r-1$ and $a_{r}\geq 2$, and showing that this threshold graph $G$ is $(0,N]$-eigenvalue free using the fact  that $N_{+}=0_{+}$. Next we compute $0_{+}$ as a function  of the number of crosses and unions in $T_{G}$ for the cases $r$ odd and even .\\

Let $G$ be a Threshold graph with cotree $T_{G}(a_{1},\, a_{2},\, \ldots,\, a_{r})$ with $a_{r}\geq 2$ and $r$ odd. When we apply the algorithm Diag$(T_{G},0)$ we are able to compute the inertia $(0_{+},0_{0},0_{-})$ of $G$.
First, note that $r=|\otimes|+|\cup|=2|\otimes|-1$ since $|\cup|=|\otimes|-1$, where $|\otimes|$ and $|\cup|$ denotes, respectively, the number of crosses and unions in $G$ w.r.t. to the cotree representation. Hence, by Theorem \ref{teo4} we obtain the following
$$0_{-}=\left(\sum_{i=1}^{|\otimes|-1}a_{2i-1}+1-1\right)+a_{r}-1=\left(\sum_{i=1}^{|\otimes|}a_{2i-1}\right)-1.$$
And, by Theorem \ref{teo5}
$$0_{0}=\sum_{i=1}^{|\cup|}\left(a_{2i}-1\right)=\sum_{i=1}^{|\cup|}\left(a_{2i}\right)-|\cup|.$$
Finally
$$0_{+}=n-0_{-}-0_{0}=n-\left(\left(\sum_{i=1}^{|\otimes|}a_{2i-1}\right)-1\right)-\left(\sum_{i=1}^{|\cup|}\left(a_{2i}\right)-|\cup|\right)$$ $$0_{+}=n-\underbrace{\left(\left(\sum_{i=1}^{|\otimes|}a_{2i-1}\right)+\sum_{i=1}^{|\cup|}\left(a_{2i}\right)\right)}_{n}+|\cup|+1$$ $$0_{+}=|\cup|+1.$$

If $r$ is even, then $r=|\otimes|+|\cup|$ with $|\otimes|=|\cup|$. Therefore, as before, by Theorem \ref{teo4}
$$0_{-}=\sum_{i=1}^{|\otimes|}(a_{2i-1}+1-1)=\sum_{i=1}^{|\otimes|}(a_{2i-1})$$
and by Theorem \ref{teo5}
$$0_{0}=\sum_{i=1}^{|\cup|}(a_{2i}-1)=\sum_{i=1}^{|\cup|}(a_{2i})+|\cup|.$$
So,
$$0_{+}=n-0_{-}-0_{0}=n-\left(\sum_{i=1}^{|\otimes|}\left(a_{2i-1}\right)\right)-\left(\sum_{i=1}^{|\cup|}\left(a_{2i}\right)+|\cup|\right)=|\cup|.$$

Therefore, as aforementioned, our approach is to construct a threshold graph such that $N_{+}=|\cup|+1$ if $r$ is odd and $N_{+}=|\cup|$ if $r$ is even.

\section{Diag$(T_{G},-N)$}
\label{secN}

Instead of directly applying the algorithm Diag$(T_{G},-N)$ for $N>0$, we initialize the process using Lemmas \ref{lem10} and \ref{lem11} in the leaves of the cotree with identical value as described in Example \ref{example1}.

First, Lemma \ref{lem10} is applied at each $\otimes$-node at depth $i$ of the cotree. We perform $a_{i}-1$ iterations that leave negative permanent assignments $\frac{j+1}{j}(-N-1)$ for $j=1\ldots a_{i}-1$ by (\ref{crossperm}) and by (\ref{crossrem}) a remaining value
$$d_{l} \leftarrow\frac{-N+a_{i}-1}{a_{i}}=1-\frac{N+1}{a_{i}}$$ at the last iteration, as represented in Figure \ref{cross}.

\begin{figure}[H]
\centering
\begin{tikzpicture}
\path( 1,1)node[shape=circle,draw,label=above:$i$,inner sep=0] (pri) {$\otimes$}
     ( 0,0)node[shape=circle,draw,label=below:$-N$,fill=black] (pri1) {}
     ( 2,0)node[shape=circle,draw,label=below:$-N$,fill=black] (pri2) {}
     ( 4,0)node[shape=circle,draw,label=right:$-$,fill=black] (seg1) {}
     ( 4,1)node[shape=circle,draw,label=right:$-$,fill=black] (seg2) {}
     ( 6,1)node[shape=circle,draw,label=above:$i$,inner sep=0] (ter) {$\otimes$}
     ( 6,0)node[shape=circle,draw,label=right:$^{1-\frac{N+1}{a_{i}}}$,fill=black] (ter1) {};

      \draw[-](pri)--(pri1);
      \draw[-](pri)--(pri2);
      \draw[dotted](pri1)--(pri2);

      \draw[dotted](seg1)--(seg2);

      \draw[-](ter)--(ter1);

       \node[text width=6cm, anchor=west, right] at (2.5,0.5) {$\rightarrow$};

\end{tikzpicture}
\caption{\label{cross} Join.}
\end{figure}

And, following Lemma \ref{lem11}, for each $\cup$-node at depth $i$ of the cotree we perform $a_{i}-1$ iterations that leave negative permanent assignments $\frac{j+1}{j}(-N)$ for $j=1\ldots a_{i}-1$ by (\ref{cupperm}) and by (\ref{cuprem}) a remaining value $$d_{l}\leftarrow\frac{-N}{a_{i}}$$ at the last iteration, as depicted in Figure \ref{union}.

\begin{figure}[H]
\centering
\begin{tikzpicture}
\path( 1,1)node[shape=circle,draw,label=above:$i$,inner sep=0] (pri) {$\cup$}
     ( 0,0)node[shape=circle,draw,label=below:$-N$,fill=black] (pri1) {}
     ( 2,0)node[shape=circle,draw,label=below:$-N$,fill=black] (pri2) {}
     ( 4,0)node[shape=circle,draw,label=right:$-$,fill=black] (seg1) {}
     ( 4,1)node[shape=circle,draw,label=right:$-$,fill=black] (seg2) {}
     ( 6,1)node[shape=circle,draw,label=above:$i$,inner sep=0] (ter) {$\cup$}
     ( 6,0)node[shape=circle,draw,label=right:$\frac{-N}{a_{i}}$,fill=black] (ter1) {};

      \draw[-](pri)--(pri1);
      \draw[-](pri)--(pri2);
      \draw[dotted](pri1)--(pri2);

      \draw[dotted](seg1)--(seg2);

      \draw[-](ter)--(ter1);

       \node[text width=6cm, anchor=west, right] at (2.5,0.5) {$\rightarrow$};

\end{tikzpicture}
\caption{\label{union} Union.}
\end{figure}

We would like to emphasize that, as described in Example \ref{example1}, the vertices associated to permanent values are removed from the cotree. Therefore, after the specialization aforementioned in the terminal children of the cotree with identical value, we have $\sum_{i=1}^{r}(a_{i}-1)=n-r$ negative permanent assignments. And, after that we will continue processing the specialized cotree in Figure \ref{specialized} using Algorithm Diagonalize from left to right. Our strategy is to show that each $\cup$-node assigns a positive permanent value and each $\otimes$-node a negative permanent value with a positive remaining value. Consequently, it will imply that $N_{+}=|\cup|+1$ if $r$ is odd and $N_{+}=|\cup|$ if $r$ is even.

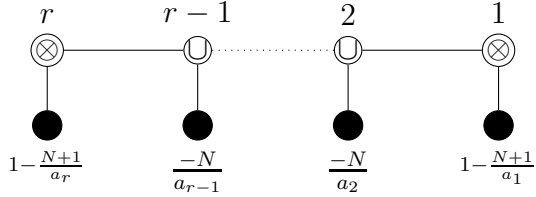
\begin{figure}[H]
\centering
\begin{tikzpicture}
\path( 0,1)node[shape=circle,draw,label=above:$r$,inner sep=0] (pri) {$\otimes$}
     ( 0,0)node[shape=circle,draw,label=below:$^{1-\frac{N+1}{a_{r}}}$,fill=black] (pri1) {}

     ( 2,1)node[shape=circle,draw,label=above:$r-1$,inner sep=0] (sec) {$\cup$}
     ( 2,0)node[shape=circle,draw,label=below:$\frac{-N}{a_{r-1}}$,fill=black] (sec1) {}

     ( 4,1)node[shape=circle,draw,label=above:$2$,inner sep=0] (ter) {$\cup$}
     ( 4,0)node[shape=circle,draw,label=below:$\frac{-N}{a_{2}}$,fill=black] (ter1) {}

     ( 6,1)node[shape=circle,draw,label=above:$1$,inner sep=0] (qua) {$\otimes$}
     ( 6,0)node[shape=circle,draw,label=below:$^{1-\frac{N+1}{a_{1}}}$,fill=black] (qua1) {};

      \draw[-](pri)--(pri1);

      \draw[-](sec)--(pri);
      \draw[-](sec)--(sec1);

      \draw[-](ter)--(ter1);

       \draw[dotted](ter)--(sec);

      \draw[-](qua)--(qua1);

       \draw[-](ter)--(qua);

\end{tikzpicture}
\caption{\label{specialized} Specialized Cotree.}
\end{figure}

It is important to point out that throughout this paper we execute Diagonalize  in two main steps. In the first one we process the leaves with identical value and obtain what we have called specialized cotree as illustrated in Figure \ref{specialized}. Then, we proceed with the algorithm on the specialized cotree as in Example \ref{example1}.

Consider the sequence of remaining values $\left\{ 1-\frac{N+1}{a_{r}}, \frac{-N}{a_{r-1}},\, ...,\, \frac{-N}{a_{2}},\, 1-\frac{N+1}{a_{1}}\right\}$ at the specialized cotree in Figure \ref{specialized} and the functions
\begin{align}
    f(X,Y)&=\frac{X\,Y-1}{X+Y-2}  \label{functioncross}\\
    g(X,Y)&=\frac{X\,Y}{X+Y}.      \label{functioncup}
\end{align}
 If we choose the initial value $s_{r+1}=  1-\frac{N+1}{a_{r}}$ and for each new value we pick $g$ for a union and $f$ for a join, then we obtain recursively
$s_{r}= g(s_{r+1},  \frac{-N}{a_{r-1}})$, $s_{r-1}= f(s_{r},  1-\frac{N+1}{a_{r-2}})$ and so on, until we reach $s_1$. These are the assignments we obtain by applying the \subia~and \subiia~from Diag$(T_{G},-N)$, which is performed from left to right. That is why we produce $s_i$ in the inverse order, from $s_{r+1}$ to $s_{1}$.

In the next two lemmas we show that by choosing the number of terminal children we can control the sign of the permanent and remaining assignments.

\begin{lemat}
\label{unionlema}
Suppose we have a $\cup$-node with leaves having values $s_{i+1}>0$ and $\frac{-N}{a_{i}}$ with $a_{i}>\frac{N}{s_{i+1}}$ as illustrated in Figure \ref{unionstep}. Then we use \subiia~ and obtain a positive permanent value $p_{i}$ and a negative remaining value $s_{i}$.
\end{lemat}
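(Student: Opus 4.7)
The plan is to verify directly that the hypothesis $a_i > N/s_{i+1}$ places the algorithm in \subiia{} at this $\cup$-node, and then read off the signs of the two output values from the formulas given in that subcase.

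First, I would set $\alpha = s_{i+1}$ and $\beta = -N/a_i$ as the two diagonal values at the sibling pair under the $\cup$-node, matching the notation of Algorithm~\ref{alg0}. The condition for \subiia{} is $\alpha + \beta \neq 0$. Rewriting the hypothesis $a_i > N/s_{i+1}$ (valid since $s_{i+1}>0$, $N>0$) as $N/a_i < s_{i+1}$, I obtain
\begin{equation*}
\alpha + \beta = s_{i+1} - \frac{N}{a_i} > 0,
\end{equation*}
so the algorithm does enter \subiia{}, and moreover the permanent value
\begin{equation*}
p_i \;=\; \alpha + \beta \;=\; s_{i+1} - \frac{N}{a_i}
\end{equation*}
is strictly positive, as claimed.

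Next I would look at the remaining value
\begin{equation*}
s_i \;=\; \frac{\alpha\beta}{\alpha+\beta} \;=\; \frac{s_{i+1}\cdot(-N/a_i)}{s_{i+1}-N/a_i}.
\end{equation*}
The numerator is the product of a positive quantity $s_{i+1}$ and a negative quantity $-N/a_i$, hence negative; the denominator was just shown to be positive. Therefore $s_i < 0$, completing the argument.

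There is no real obstacle here: once the hypothesis is translated into the inequality $s_{i+1} - N/a_i > 0$, both conclusions follow from a direct sign analysis of the two \subiia{} formulas. The only thing to be careful about is confirming that \subiia{} (not \subiib) is the branch taken, which is exactly what the strict inequality $a_i > N/s_{i+1}$ guarantees.
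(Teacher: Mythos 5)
Your proof is correct and takes essentially the same approach as the paper: translate the hypothesis $a_i > N/s_{i+1}$ into $\alpha+\beta = s_{i+1}-N/a_i>0$, verify that \subiia{} is the branch taken, and then read off the signs of $p_i=\alpha+\beta$ and $s_i=\alpha\beta/(\alpha+\beta)$ directly from the formulas.
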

\begin{proof}

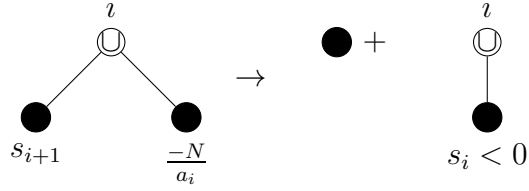
\begin{figure}[H]
\centering
\begin{tikzpicture}
\path( 1,1)node[shape=circle,draw,label=above:$i$,inner sep=0] (pri) {$\cup$}
     ( 0,0)node[shape=circle,draw,label=below:$s_{i+1}$,fill=black] (pri1) {}
     ( 2,0)node[shape=circle,draw,label=below:$\frac{-N}{a_{i}}$,fill=black] (pri2) {}
     ( 4,1)node[shape=circle,draw,label=right:$+$,fill=black] (seg2) {}

     ( 6,1)node[shape=circle,draw,label=above:$i$,inner sep=0] (ter) {$\cup$}
     ( 6,0)node[shape=circle,draw,label=below:$s_{i}<0$,fill=black] (ter1) {};

      \draw[-](pri)--(pri1);
      \draw[-](pri)--(pri2);

      \draw[-](ter)--(ter1);

       \node[text width=6cm, anchor=west, right] at (2.5,0.5) {$\rightarrow$};

\end{tikzpicture}
\caption{\label{unionstep} Union step.}
\end{figure}

Since $\alpha=s_{i+1}>\frac{N}{a_{i}}$ and $\beta=-\frac{N}{a_{i}}$, we can execute \subiia~because $\alpha+\beta>0$. It returns a positive permanent assignment $$p_{i}=d_{k}\leftarrow s_{i+1}-\frac{N}{a_{i}}>0,$$
and a negative remaining value $$s_{i}=d_{l}\leftarrow g\left(s_{i+1},-\frac{N}{a_{i}}\right)=\frac{s_{i+1}\left(\frac{-N}{a_{i}}\right)}{p_{i}}<0.$$

\end{proof}


\begin{lemat}
\label{crosslema}
Suppose we have a $\otimes$-node with leaves having values $s_{i+1}<0$ and $1-\frac{(N+1)}{a_{i}}$ with $a_{i}>\frac{N+1}{1-\left(\frac{1}{s_{i+1}}\right)}$ as illustraded in Figure \ref{crossstep}. Then we use \subia~ and obtain a negative permanent value $p_{i}$ and a positive remaining value $s_{i}$.
\end{lemat}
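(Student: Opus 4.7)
The plan is to invoke \subia\ of Algorithm \ref{alg0} directly, with $\alpha = s_{i+1}$ and $\beta = 1 - \frac{N+1}{a_i}$, and then track the sign of each output. First I would verify that \subia\ is in fact the branch that fires: since $s_{i+1} < 0$ and $\beta \le 1$, the sum $\alpha + \beta$ is strictly less than $1$, so the test $\alpha + \beta \neq 2$ is passed (and consequently neither \subib\ nor \subic, which require $\alpha + \beta = 2$, can occur).

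Next I would compute the permanent value. By the assignment in \subia,
$$p_i = \alpha + \beta - 2 = s_{i+1} - 1 - \frac{N+1}{a_i},$$
a sum of three negative terms, so $p_i < 0$. This settles the first claim of the lemma.

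For the remaining value $s_i = f\!\left(s_{i+1},\, 1 - \frac{N+1}{a_i}\right) = (\alpha\beta - 1)/p_i$, since $p_i < 0$ it suffices to show that $\alpha\beta - 1 < 0$, i.e., that $s_{i+1}\bigl(1 - \frac{N+1}{a_i}\bigr) < 1$. I would split into two cases. If $a_i \ge N+1$ then the factor $1 - \frac{N+1}{a_i}$ is nonnegative while $s_{i+1}$ is negative, so the product is nonpositive and the inequality is immediate. If instead $a_i < N+1$ then both factors are negative and the product is positive, so the inequality must be extracted from the hypothesis: multiplying through by $a_i > 0$ and dividing by $-s_{i+1} > 0$ rearranges the desired inequality to $a_i > \frac{N+1}{1 - 1/s_{i+1}}$, which is exactly the standing assumption.

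The only subtle point I expect is sign bookkeeping. Because $s_{i+1} < 0$, the quantity $1 - 1/s_{i+1}$ exceeds $1$, and hence $\frac{N+1}{1 - 1/s_{i+1}} < N+1$; in other words the hypothesis is genuinely \emph{weaker} than $a_i > N+1$, which is why the small case $a_i < N+1$ cannot be discarded and must be treated separately rather than absorbed into the easy case. Beyond this, the argument is routine algebra and I anticipate no real obstacle.
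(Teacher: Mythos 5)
Your proposal is correct and follows essentially the same route as the paper: apply \subia\ with $\alpha=s_{i+1}$, $\beta=1-\frac{N+1}{a_i}$, observe $p_i=\alpha+\beta-2<0$ directly, then conclude $s_i=(\alpha\beta-1)/p_i>0$ by showing $\alpha\beta-1<0$ under the hypothesis on $a_i$. The paper simply asserts the equivalence between $s_i>0$ and $a_i>\frac{N+1}{1-1/s_{i+1}}$, whereas your case split on $a_i\gtrless N+1$ spells out the sign bookkeeping more explicitly; this is a presentational refinement rather than a different argument.
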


\begin{proof}

\begin{figure}[H]
\centering
\begin{tikzpicture}
\path( 1,1)node[shape=circle,draw,label=above:$i$,inner sep=0] (pri) {$\otimes$}
     ( 0,0)node[shape=circle,draw,label=below:$s_{i+1}$,fill=black] (pri1) {}
     ( 2,0)node[shape=circle,draw,label=below:$1-\frac{N+1}{a_{i}}$,fill=black] (pri2) {}

     ( 4,1)node[shape=circle,draw,label=right:$-$,fill=black] (seg2) {}

     ( 6,1)node[shape=circle,draw,label=above:$i$,inner sep=0] (ter) {$\otimes$}
     ( 6,0)node[shape=circle,draw,label=below:$s_{i}>0$,fill=black] (ter1) {};

      \draw[-](pri)--(pri1);
      \draw[-](pri)--(pri2);

      \draw[-](ter)--(ter1);

       \node[text width=6cm, anchor=west, right] at (2.5,0.5) {$\rightarrow$};

\end{tikzpicture}
\caption{\label{crossstep} Join step.}
\end{figure}

Since $\alpha=s_{i+1}<0$ and $\beta=1-\frac{N+1}{a_{i}}$ then we can execute \subia~because $\alpha+\beta-2=s_{i+1}+1-\frac{N+1}{a_{i}}-2=s_{i+1}-1-\frac{N+1}{a_{i}}<0$. It returns a negative permanent assignment $$p_{i}=d_{k}\leftarrow s_{i+1}-1-\frac{N+1}{a_{i}}<0,$$
and a positive remaining value
$$s_{i}=d_{l}\leftarrow f\left(s_{i+1},1-\frac{N+1}{a_{i}}\right)=\frac{s_{i+1}\left(1-\frac{N+1}{a_{i}}\right)-1}{p_{i}}>0$$
if and only if $$a_{i}>\frac{N+1}{1-\left(\frac{1}{s_{i+1}}\right)},$$
as our hypothesis holds.

\end{proof}

Next, we will show that the two lemmas above used together produce an algorithm to construct threshold graphs that are $(0,N]$-eigenvalue free.

\textbf{Initial step}: Considering $r$ odd, after the specialization, the leaf at the $\otimes$-node of depth $r$ has the assignment $$s_{r}=1-\frac{N+1}{a_{r}}>0$$ if and only if $a_{r}>N+1$. Then, using $a_{r}>N+1$ and Lemmas \ref{unionlema} and \ref{crosslema} we can construct a threshold graph of depth $r$ that is $(0,N]$-eigenvalue free.

And for $r$ even, after the specialization at the node of depth $r$ we have a leaf with assignment $$s_{r}=\frac{-N}{a_{r}}<0$$ for  $a_{r}\geq 2$. Then, using $a_{r}\geq 2$ and lemmas \ref{unionlema} and \ref{crosslema} we can construct a threshold graph of depth $r$ that is $(0,N]$-eigenvalue free.\\

The next theorem sums up the above observations.

\begin{teore}
\label{main_theorem1}
  Let $N>0$ be a fixed number and $r$ an odd number (case $r$ even is similar), if we choose natural numbers
  $a_{r}>N+1$, $a_{r-1}> \frac{N}{s_{r}}$, $a_{r-2}> \frac{N+1}{1-\left(\frac{1}{s_{r-1}}\right)}$, etc., then $T_{G}(a_{1},\, a_{2},\, \ldots,\, a_{r})$ is $(0,N]$-eigenvalue free.
\end{teore}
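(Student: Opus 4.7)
The plan is to run Diagonalize$(T_G,-N)$ on $T_G(a_1,\ldots,a_r)$ and count the positive entries of the resulting diagonal; by Theorem \ref{inertia} this count equals $N_+$, the number of eigenvalues of $G$ strictly greater than $N$. If I can show $N_+=|\cup|+1$, which by Section \ref{Inertia0} equals $0_+$ in the $r$-odd case, then the eigenvalue counts above $0$ and above $N$ agree and no eigenvalue of $G$ lies in $(0,N]$.

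As a first step I would perform the specialization of Example \ref{example1}: applying Lemma \ref{lem10} at each $\otimes$-node and Lemma \ref{lem11} at each $\cup$-node deposits $n-r$ negative permanent entries in the diagonal and leaves the specialized cotree of Figure \ref{specialized}, whose lone remaining leaf at depth $i$ is $1-(N+1)/a_i$ at an $\otimes$-node and $-N/a_i$ at a $\cup$-node. I would then process this specialized cotree from depth $r$ down to depth $1$, proving by induction on $i$ that the remaining value $s_i$ leaving the depth-$i$ step has the sign opposite to the one entering, with the base case $s_r=1-(N+1)/a_r>0$ guaranteed by $a_r>N+1$. At a $\cup$-node (even $i$) entered with $s_{i+1}>0$, the bound $a_i>N/s_{i+1}$ is exactly the hypothesis of Lemma \ref{unionlema}, which via \subiia\ produces a positive permanent and a negative $s_i$. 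At an $\otimes$-node (odd $i$) entered with $s_{i+1}<0$, the bound $a_i>(N+1)/(1-1/s_{i+1})$ is exactly the hypothesis of Lemma \ref{crosslema}, which via \subia\ produces a negative permanent and a positive $s_i$. Since $r$ is odd, the process terminates at the depth-$1$ node (of type $\otimes$) with a positive final remaining $s_1$.

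Tallying the positive diagonal entries: the $|\cup|=(r-1)/2$ positive permanents contributed by the $\cup$-steps together with the positive $s_1$ give $N_+=|\cup|+1=0_+$, finishing the proof. The main obstacle I expect is keeping the inductive bookkeeping straight, namely verifying at each step that the recursive bounds on $a_i$ stated in the theorem are precisely those needed to invoke Lemmas \ref{unionlema} and \ref{crosslema} with the correct sign of $s_{i+1}$; the rest of the argument is a direct appeal to those lemmas and to the inertia computation of Section \ref{Inertia0}.
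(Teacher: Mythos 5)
Your proposal is correct and follows essentially the same route as the paper: specialize via Lemmas \ref{lem10} and \ref{lem11} to deposit $n-r$ negative entries, then process the specialized cotree bottom-up alternating Lemmas \ref{unionlema} and \ref{crosslema}, with base case $s_r>0$ from $a_r>N+1$, and tally $N_+=|\cup|+1=0_+$. The inductive bookkeeping you flag as the main obstacle is precisely what the paper carries out, step by step, in its proof of Theorem \ref{main_theorem1}.
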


\begin{proof}
Considering the case $r$ odd we have to prove that $N_{+}=0_{+}=|\cup|+1$. We start the Diag$(T_{G},-N)$ using Lemmas \ref{lem10} and \ref{lem11} to process the terminal children with identical value. As afore-explained this initial process produces $(n-r)$ negative permanent diagonal assignments, adding $(n-r)$ to $N_{-}$. Then we proceed the Algorithm  Diagonalize on the specialized cotree in Figure \ref{specialized}. The last $\otimes$-node at depth $r$ has a leaf with assignment $s_{r}=1-\frac{N+1}{a_{r}}>0$ which will be relocated to the next level at the $\cup$-node as shown in Figure \ref{specialized1}.

\begin{figure}[H]
\centering
\begin{tikzpicture}
\path( 0,0)node[shape=circle,draw,label=below:$^{s_{r}=1-\frac{N+1}{a_{r}}}$,fill=black] (pri1) {}

     ( 2,1)node[shape=circle,draw,label=above:$r-1$,inner sep=0] (sec) {$\cup$}
     ( 2,0)node[shape=circle,draw,label=below:$\frac{-N}{a_{r-1}}$,fill=black] (sec1) {}

     ( 4,1)node[shape=circle,draw,label=above:$2$,inner sep=0] (ter) {$\cup$}
     ( 4,0)node[shape=circle,draw,label=below:$\frac{-N}{a_{2}}$,fill=black] (ter1) {}

     ( 6,1)node[shape=circle,draw,label=above:$1$,inner sep=0] (qua) {$\otimes$}
     ( 6,0)node[shape=circle,draw,label=below:$^{1-\frac{N+1}{a_{1}}}$,fill=black] (qua1) {};

      \draw[-](sec)--(pri1);
      \draw[-](sec)--(sec1);

      \draw[-](ter)--(ter1);

       \draw[dotted](ter)--(sec);

      \draw[-](qua)--(qua1);

       \draw[-](ter)--(qua);

\end{tikzpicture}
\caption{\label{specialized1} Specialized Cotree relocated.}
\end{figure}
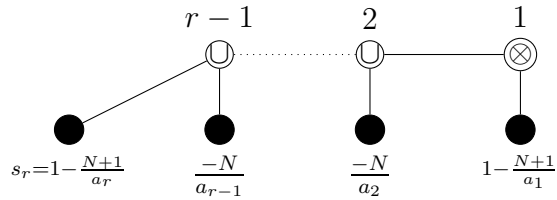

Using Lemma \ref{unionlema} at the $\cup$-node at depth $r-1$ we obtain a positive permanent diagonal value $p_{r-1}>0$ and a remaining assignment $s_{r-1}<0$. The leaf having the value $s_{r-1}$ is then relocated to the next $\otimes$-node at level $r-2$ as depicted in Figure \ref{specialized2}. At the $\otimes$-node at depth $r-2$ we apply Lemma \ref{crosslema} and it creates a negative permanent diagonal value $p_{r-2}<0$ and  a remaining value $s_{r-2}>0$ which will be relocated to the next level $r-3$.

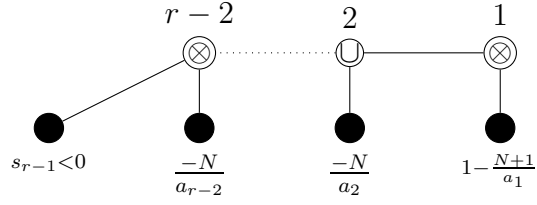
\begin{figure}[H]
\centering
\begin{tikzpicture}
\path( 0,0)node[shape=circle,draw,label=below:$^{s_{r-1}<0}$,fill=black] (pri1) {}

     ( 2,1)node[shape=circle,draw,label=above:$r-2$,inner sep=0] (sec) {$\otimes$}
     ( 2,0)node[shape=circle,draw,label=below:$\frac{-N}{a_{r-2}}$,fill=black] (sec1) {}

     ( 4,1)node[shape=circle,draw,label=above:$2$,inner sep=0] (ter) {$\cup$}
     ( 4,0)node[shape=circle,draw,label=below:$\frac{-N}{a_{2}}$,fill=black] (ter1) {}

     ( 6,1)node[shape=circle,draw,label=above:$1$,inner sep=0] (qua) {$\otimes$}
     ( 6,0)node[shape=circle,draw,label=below:$^{1-\frac{N+1}{a_{1}}}$,fill=black] (qua1) {};

      \draw[-](sec)--(pri1);
      \draw[-](sec)--(sec1);

      \draw[-](ter)--(ter1);

       \draw[dotted](ter)--(sec);

      \draw[-](qua)--(qua1);

       \draw[-](ter)--(qua);

\end{tikzpicture}
\caption{\label{specialized2} Specialized Cotree relocated.}
\end{figure}

Continuing this process, the $\otimes$-node at level $1$ will have two leafs with assignments $s_{2}$ and $1-\frac{N+1}{a_{1}}$ as illustrated in Figure \ref{finalcross} left. Once we apply Lemma \ref{crosslema} we process the two remaining vertices whose diagonal values will be $p_{1}<0$ and $s_{1}>0$.

\begin{figure}[H]
\centering
\begin{tikzpicture}
\path( 1,1)node[shape=circle,draw,label=above:$1$,inner sep=0] (pri) {$\otimes$}
     ( 0,0)node[shape=circle,draw,label=below:$s_{2}$,fill=black] (pri1) {}
     ( 2,0)node[shape=circle,draw,label=below:$1-\frac{N+1}{a_{1}}$,fill=black] (pri2) {}

     ( 4,1)node[shape=circle,draw,label=right:$-$,fill=black] (seg2) {}

     ( 6,1)node[shape=circle,draw,label=above:$1$,inner sep=0] (ter) {$\otimes$}
     ( 6,0)node[shape=circle,draw,label=below:$s_{1}>0$,fill=black] (ter1) {};

      \draw[-](pri)--(pri1);
      \draw[-](pri)--(pri2);

      \draw[-](ter)--(ter1);

       \node[text width=6cm, anchor=west, right] at (2.5,0.5) {$\rightarrow$};

\end{tikzpicture}
\caption{\label{finalcross} Last iteration.}
\end{figure}
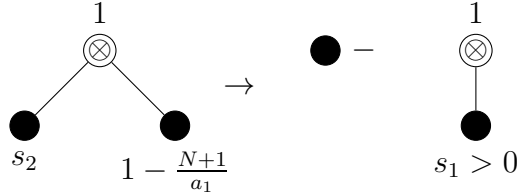

The algorithm stops, and then each $\cup$-node has produced a positive permanent diagonal assignment, adding $|\cup|$ to $N_{+}$. And each $\otimes$-node, except the one at depth $r$, has produced a negative permanent diagonal value, adding $|\otimes|-1$ to $N_{-}$. And the final permanent diagonal value is $s_{1}>0$, adding $+1$ to $N_{+}$. Therefore $N_{-}=(n-r)+|\otimes|-1$, $N_{0}=0$ and $N_{+}=|\cup|+1$.

\end{proof}

\begin{remark}
The values $a_{1},\, a_{2},\, \ldots,\, a_{r}$ are natural numbers so $a_{r-1}> \frac{N}{s_{r}}$ for $r$ odd (case $r$ even is similar), it means that $a_{r-1} \geq  \left\lceil\frac{N}{s_{r}}\right\rceil$ if $\frac{N}{s_{r}}$ is not an integer number, otherwise $a_{r-1} \geq  \left\lceil \frac{N}{s_{r}}\right\rceil +1$. To avoid this confusion we can always choose $a_{r}\geq 1+\left\lfloor N+1\right\rfloor$, $a_{r-1}\geq  1+\left\lfloor \frac{N}{s_{r}}\right\rfloor$, $a_{r-2}\geq  1+\left\lfloor \frac{N+1}{1-\left(\frac{1}{s_{r-1}}\right)}\right\rfloor$, etc., to construct  $T_{G}(a_{1},\, a_{2},\, \ldots,\, a_{r})$.
\end{remark}

Now we can define an algorithm to produce threshold graphs $T_{G}(a_{1},\, a_{2},\, \ldots,\, a_{r})$  satisfying Theorem \ref{main_theorem1}:\\

\begin{algorithm}[H]
\caption{The Right Free Interval Algorithm: RFI$(N,r)$}\label{alg1}
\begin{algorithmic}
\Require a positive real number $N$ and a positive integer $r$
\Ensure cotree $T_{G}(a_{1},a_{2},\ldots,a_{r})$
\If{$r$ is odd}
    \State Choose $a_{r}\geq 1+\lfloor N+1\rfloor$
    \State $s_{r} \gets1-\frac{1+N}{a_{r}}$  
\ElsIf{$r$ is even}
    \State  Choose $a_{r} \geq 2$
    \State $s_{r} \gets  -\frac{N}{a_{r}}$
\EndIf
\For{$i=r-1$ to $1$}

\If{$i$ is odd}
    \State Choose $a_{i}\geq 1+\left\lfloor \frac{N+1}{1-\left(\frac{1}{s_{i+1}}\right)}\right\rfloor$
    \State $p_{i} \gets s_{i+1}-1-\left(\frac{N+1}{a_{i}}\right) $
    \State $s_{i} \gets f\left(s_{i+1},1-\left(\frac{N+1}{a_{i}}\right)\right)$
\ElsIf{$i$ is even}
    \State Choose $a_{i}\geq 1+\left\lfloor \frac{N}{s_{i+1}} \right\rfloor$
    \State $p_{i} \gets s_{i+1}-\frac{N}{a_{i}} $
    \State $s_{i} \gets g\left(s_{i+1},-\frac{N}{a_{i}}\right)$
\EndIf

\EndFor

\end{algorithmic}
\end{algorithm}

\begin{defn}
Let $N>0$ be a fixed number and $r$ an odd number (case $r$ even is similar), if we choose natural numbers
$a_{r}=1+\left\lfloor N+1\right\rfloor$, $a_{r-1}=  1+\left\lfloor \frac{N}{s_{r+1}}\right\rfloor$, $a_{r-2}=  1+\left\lfloor \frac{N+1}{1-\left(\frac{1}{s_{r}}\right)}\right\rfloor$, etc., then $T_{G}(a_{1},\, a_{2},\, \ldots,\, a_{r})$ is called the initial threshold w.r.t. $N$, having no eigenvalues in the interval $(0,\, N]$.
\end{defn}

Notice that the initial threshold is obtained by making the smallest choice for each $a_{i}$ at {\rm RFI}$(N,r)$.\\

Next, we denote the first positive eigenvalue of a graph $G$ by a $+$ sign in the exponent, as $\theta^{+}(G)$.

\begin{Example}
Given $N=4.8$ and $r=5$ then {\rm RFI}$(4.8,\, 5)$ generates the initial threshold graph with cotree $T_{G}(5,145,5,145,6)$ which is $(0,4.8]$-eigenvalue free. Indeed, a direct computation shows that $\theta^{+}(G)=4.80000053517$.
\end{Example}

\begin{Example}
\label{rightex2}
Given $N=4.8$ and $r=6$ then {\rm RFI}$(4.8,\, 6)$  generates the initial threshold graph with cotree $T_{G}(6,44,6,36,5,2)$ which  is $(0,4.8]$-eigenvalue free. Indeed, a direct computation shows that $\theta^{+}(G)=4.80016011291$.
\end{Example}


\section{Interval $[M,-1)$}
\label{Inertia-1}

Given a real number $M<-1$, as before, our method consists in constructing a threshold graph $G$ with cotree $T_{G}(a_{1},\, a_{2},\, \ldots,\, a_{r})$, by choosing each $a_{i}\geq 1$ for $1\leq i\leq r-1$ and $a_{r}\geq 2$, and showing that this threshold graph $G$ is $[M,-1)$-eigenvalue free using the fact  that $M_{+}=(-1)_{0}+0_{0}+0_{+}$. Next we compute $(-1)_{0}+0_{0}+0_{+}$ for the cases $r$ odd and even.\\

Let $G$ be a threshold graph with cotree representation $T_{G}(a_{1},\, a_{2},\, \ldots,\, a_{r})$ with $a_{r}\geq 2$ and $r$ odd. Then $r=|\otimes|+|\cup|$ where $|\otimes|=|\cup|+1$. Using Theorem \ref{teo6} we obtain that $(-1)_{0}=\sum_{i=1}^{|\otimes|}(a_{2i-1}-1)$, so

\begin{equation*}
\begin{split}
(-1)_{0}+0_{0}+0_{+} & =\sum_{i=1}^{|\otimes|}(a_{2i-1}-1)+0_{0}+(n-0_{-}-0_{0})\\
                     & =\sum_{i=1}^{|\otimes|}(a_{2i-1}-1)+n-\left(\sum_{i=1}^{|\otimes|}(a_{2i-1})-1\right)\\
                     &=n-|\otimes|+1\\
                     & =(n-r)+(|\cup|+1),
\end{split}
\end{equation*}

since $r=|\cup|+|\otimes|$.

And, if $r$ is even then $0_{-}=\sum_{i=1}^{|\otimes|}(a_{2i-1})$. Hence,
$$(-1)_{0}+0_{0}+0_{+}=n-|\otimes|=(n-r)+|\cup|.$$

In this case, our approach is to construct an threshold graph such that $M_{+}=(n-r)+|\cup|+1$ if $r$ is odd and $M_{+}=(n-r)+|\cup|$ if $r$ is even.


\section{Diag$(T_{G},-M)$}
\label{secM}

We start the algorithm Diag$(T_{G},-M)$, for a fixed number $M<-1$, processing the leaves with identical value using Lemmas \ref{lem10} and \ref{lem11} as follows.\\

Lemma \ref{crosslema} is applied at each $\otimes$-node at depth $i$. We perform $a_{i}-1$ iterations that leave positive permanent diagonal values $\frac{j+1}{j}(-M-1)$ for $j=1\ldots a_{i}-1$ by (\ref{crossperm}). And, by (\ref{crossrem}), at the last iteration, a remaining assignment $$d_{l}\leftarrow\frac{-M+a_{i}-1}{a_{i}}=1-\frac{M+1}{a_{i}}$$ as represented in Figure \ref{crossleft}.

\begin{figure}[H]
\centering
\begin{tikzpicture}
\path( 1,1)node[shape=circle,draw,label=above:$i$,inner sep=0] (pri) {$\otimes$}
     ( 0,0)node[shape=circle,draw,label=below:$-M$,fill=black] (pri1) {}
     ( 2,0)node[shape=circle,draw,label=below:$-M$,fill=black] (pri2) {}
     ( 4,0)node[shape=circle,draw,label=right:$+$,fill=black] (seg1) {}
     ( 4,1)node[shape=circle,draw,label=right:$+$,fill=black] (seg2) {}
     ( 6,1)node[shape=circle,draw,label=above:$i$,inner sep=0] (ter) {$\otimes$}
     ( 6,0)node[shape=circle,draw,label=right:$^{1-\frac{M+1}{a_{i}}}$,fill=black] (ter1) {};

      \draw[-](pri)--(pri1);
      \draw[-](pri)--(pri2);
      \draw[dotted](pri1)--(pri2);

      \draw[dotted](seg1)--(seg2);

      \draw[-](ter)--(ter1);

       \node[text width=6cm, anchor=west, right] at (2.5,0.5) {$\rightarrow$};

\end{tikzpicture}
\caption{\label{crossleft} Join.}
\end{figure}

And, at each $\cup$-node at depth $i$ we use Lemma \ref{unionlema} as follows. We perform $a_{i}-1$ iterations that leave positive permanent diagonal values $\frac{j+1}{j}(-M)$ for $j=1\ldots a_{i}-1$ by (\ref{cupperm}). And a remaining value
$$d_{l}\leftarrow\frac{-M}{a_{i}}$$
at the last iteration by (\ref{cuprem}) as represented in Figure \ref{unionleft}.

\begin{figure}[H]
\centering
\begin{tikzpicture}
\path( 1,1)node[shape=circle,draw,label=above:$i$,inner sep=0] (pri) {$\cup$}
     ( 0,0)node[shape=circle,draw,label=below:$-M$,fill=black] (pri1) {}
     ( 2,0)node[shape=circle,draw,label=below:$-M$,fill=black] (pri2) {}
     ( 4,0)node[shape=circle,draw,label=right:$+$,fill=black] (seg1) {}
     ( 4,1)node[shape=circle,draw,label=right:$+$,fill=black] (seg2) {}
     ( 6,1)node[shape=circle,draw,label=above:$i$,inner sep=0] (ter) {$\cup$}
     ( 6,0)node[shape=circle,draw,label=right:$\frac{-M}{a_{i}}$,fill=black] (ter1) {};

      \draw[-](pri)--(pri1);
      \draw[-](pri)--(pri2);
      \draw[dotted](pri1)--(pri2);

      \draw[dotted](seg1)--(seg2);

      \draw[-](ter)--(ter1);

       \node[text width=6cm, anchor=west, right] at (2.5,0.5) {$\rightarrow$};

\end{tikzpicture}
\caption{\label{unionleft} Union.}
\end{figure}

Therefore, after the specialization in the leaves with identical value we already have $\sum_{i=1}^{r}(a_{i}-1)=n-r$ positive permanent diagonal values. Hence, following our strategy we will work with the cotree represented in Figure \ref{specializedleft}  and we will show that each $\cup$-node gives a positive permanent diagonal value and each $\otimes$-node returns a negative permanent diagonal value with a positive remaining assignment. It will imply that $M_{+}=(n-r)+(|\cup|+1)$ if $r$ is odd and $M_{+}=(n-r)+|\cup|$ if $r$ is even.

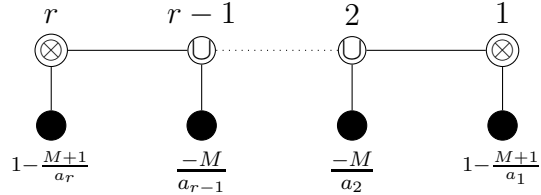
\begin{figure}[H]
\centering
\begin{tikzpicture}
\path( 0,1)node[shape=circle,draw,label=above:$r$,inner sep=0] (pri) {$\otimes$}
     ( 0,0)node[shape=circle,draw,label=below:$^{1-\frac{M+1}{a_{r}}}$,fill=black] (pri1) {}

     ( 2,1)node[shape=circle,draw,label=above:$r-1$,inner sep=0] (sec) {$\cup$}
     ( 2,0)node[shape=circle,draw,label=below:$\frac{-M}{a_{r-1}}$,fill=black] (sec1) {}

     ( 4,1)node[shape=circle,draw,label=above:$2$,inner sep=0] (ter) {$\cup$}
     ( 4,0)node[shape=circle,draw,label=below:$\frac{-M}{a_{2}}$,fill=black] (ter1) {}

     ( 6,1)node[shape=circle,draw,label=above:$1$,inner sep=0] (qua) {$\otimes$}
     ( 6,0)node[shape=circle,draw,label=below:$^{1-\frac{M+1}{a_{1}}}$,fill=black] (qua1) {};

      \draw[-](pri)--(pri1);

      \draw[-](sec)--(pri);
      \draw[-](sec)--(sec1);

      \draw[-](ter)--(ter1);

       \draw[dotted](ter)--(sec);

      \draw[-](qua)--(qua1);

       \draw[-](ter)--(qua);

\end{tikzpicture}
\caption{\label{specializedleft} Specialized Cotree.}
\end{figure}

Next, we prove that each $\otimes$-node leaves a permanent negative value and a remaining value positive. Whereas, each $\cup$-node leaves a permanent positive value. And, these results will prove that we obtain $|\cup|$ positive permanent values. In the last iteration, at the final $\otimes$-node, we will have a positive remaining diagonal assignment . It will imply that $M_{+}=(n-r)+(|\cup|+1)$ if $r$ is odd and $M_{+}=(n-r)+|\cup|$ if $r$ is even.

Consider the sequence of remaining assignments $\left\{ 1-\frac{M+1}{a_{r}}, \frac{-M}{a_{r-1}},\, ...,\, \frac{-M}{a_{2}},\, 1-\frac{M+1}{a_{1}}\right\}$ at the specialized cotree in Figure \ref{specializedleft}. If we choose the initial value $s_{r+1}=  1-\frac{M+1}{a_{r}}$ and for each iteration we choose the function (\ref{functioncup}) for a union and (\ref{functioncross}) for a join, then we obtain recursively $s_{r}= g\left(s_{r+1},  \frac{-M}{a_{r-1}}\right)$, $s_{r-1}= f\left(s_{r},  1-\frac{N+1}{a_{r-2}}\right)$ and so on, until we reach $s_1$. These are the assignments we obtain by applying the \subia~or \subiia~from Diag$(T_{G},-M)$, which is performed from left to right. That is why we produce $s_i$ in the inverse order, from $s_{r+1}$ to $s_{1}$.

In the next two lemmas we show that by choosing the number of terminal children we can control the sign of the permanent and remaining assignments.

\begin{lemat}
\label{unionlemaleft}
Suppose we have a $|\cup|$-node with leaves having values $s_{i+1}>1$ and $\frac{-M}{a_{i}}$ with $a_{i}>-M+\frac{M}{s_{i+1}}$ as illustrated in Figure \ref{unionstepleft}. Then we use \subiia~ and obtain a positive permanent value $p_{i}$ and a remaining value $0<s_{i}<1$.
\end{lemat}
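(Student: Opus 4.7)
The plan mirrors the proofs of Lemmas \ref{unionlema} and \ref{crosslema}: identify which branch of Algorithm \ref{alg0} is actually taken, read off the permanent value $p_i$ and the remaining value $s_i$ directly from the algorithm, and then translate the desired bounds on $s_i$ into inequalities on $a_i$.

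Setting $\alpha = s_{i+1}$ and $\beta = -M/a_i$, the hypothesis $s_{i+1} > 1$ together with $M < -1$ (so $\beta > 0$) yields $\alpha + \beta > 0$, so \subiia{} is the branch executed. The algorithm then prescribes
$$p_i = d_k \leftarrow \alpha + \beta = s_{i+1} - \frac{M}{a_i} > 0,$$
and
$$s_i = d_l \leftarrow g\!\left(s_{i+1},\, -\frac{M}{a_i}\right) = \frac{-s_{i+1}M/a_i}{s_{i+1} - M/a_i},$$
which, as a ratio of two positive numbers, is automatically positive. Hence $p_i > 0$ and $s_i > 0$ come for free.

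The only actual content of the lemma is the upper bound $s_i < 1$. Clearing the positive denominator, this is equivalent to $-s_{i+1}M/a_i < s_{i+1} - M/a_i$, which after regrouping reads $(M/a_i)(1 - s_{i+1}) < s_{i+1}$. Multiplying by $a_i > 0$ and then dividing by $s_{i+1} > 0$ gives $M(1 - s_{i+1})/s_{i+1} < a_i$, and distributing yields precisely $a_i > -M + M/s_{i+1}$, which is exactly the standing hypothesis.

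The main obstacle, such as it is, lies in careful sign-tracking in this last rearrangement: since both $M < 0$ and $1 - s_{i+1} < 0$, the quantity $(M/a_i)(1 - s_{i+1})$ is in fact positive, so one must resist the temptation to treat it as negative and must multiply/divide by the correctly signed factors ($a_i > 0$ and $s_{i+1} > 0$) so as not to inadvertently flip the inequality. Apart from this bookkeeping, the proof is a mechanical application of \subiia{}, in direct parallel with Lemmas \ref{unionlema} and \ref{crosslema}.
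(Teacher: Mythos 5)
Your proof is correct and takes essentially the same route as the paper's: set $\alpha=s_{i+1}$, $\beta=-M/a_i$, note $\alpha+\beta>0$ to invoke \subiia, read off $p_i>0$ and $s_i>0$ directly, and show algebraically that $s_i<1$ is equivalent to the hypothesis $a_i>-M+M/s_{i+1}$. The paper states this last equivalence without spelling out the intermediate algebra (and observes the slightly stronger $p_i>1$, though only $p_i>0$ is claimed), but the substance is identical.
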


\begin{proof}

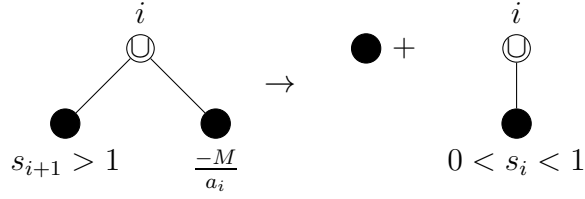
\begin{figure}[H]
\centering
\begin{tikzpicture}
\path( 1,1)node[shape=circle,draw,label=above:$i$,inner sep=0] (pri) {$\cup$}
     ( 0,0)node[shape=circle,draw,label=below:$s_{i+1}>1$,fill=black] (pri1) {}
     ( 2,0)node[shape=circle,draw,label=below:$\frac{-M}{a_{i}}$,fill=black] (pri2) {}

     ( 4,1)node[shape=circle,draw,label=right:$+$,fill=black] (seg2) {}

     ( 6,1)node[shape=circle,draw,label=above:$i$,inner sep=0] (ter) {$\cup$}
     ( 6,0)node[shape=circle,draw,label=below:$0<s_{i}<1$,fill=black] (ter1) {};

      \draw[-](pri)--(pri1);
      \draw[-](pri)--(pri2);

      \draw[-](ter)--(ter1);

       \node[text width=6cm, anchor=west, right] at (2.5,0.5) {$\rightarrow$};

\end{tikzpicture}
\caption{\label{unionstepleft} Union step.}
\end{figure}
Since $\alpha=s_{i+1}>1$ and $\beta=-\frac{M}{a_{i}}$ then $\alpha+\beta>1$ and we can use \subiia~. It returns a permanent diagonal assignment
$$p_{i}=d_{k}\leftarrow s_{i+1}-\frac{M}{a_{i}}>1$$ and a remaining one $$s_{i}=d_{l}\leftarrow g\left(s_{i+1},-\frac{M}{a_{i}}\right)=\frac{\left(s_{i+1}\right)\left(\frac{-M}{a_{i}}\right)}{p_{i}}>0.$$
Now, we want to show that $s_{i}<1$, so
$$s_{i}=\frac{\left(s_{i+1}\right)\left(\frac{-M}{a_{i}}\right)}{s_{i+1}-\frac{M}{a_{i}}}<1$$
if and only if
$$a_{i}>-M+\frac{M}{s_{i+1}},$$
as our hypothesis holds.

\end{proof}

\begin{lemat}
\label{crosslemaleft}
Suppose we have a $\otimes$-node with leaves having assignments $0<s_{i+1}<1$ and $1-\frac{(M+1)}{a_{i}}$ with $a_{i}>-\frac{M+1}{1-s_{i+1}}$ as illustrated in Figure \ref{crossstepleft}. Then we use \subia~ and obtain a negative permanent value $p_{i}$ and a positive remaining value $s_{i}>1$.
\end{lemat}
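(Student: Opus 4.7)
The plan is to mirror the argument of Lemma \ref{crosslema}, applying \subia~with $\alpha=s_{i+1}$ and $\beta=1-\frac{M+1}{a_{i}}$, but carefully tracking the sign change that comes from $M<-1$. Since $M+1<0$, we now have $\beta>1$ (whereas in the $N$-case we had $\beta<1$), and this sign flip is exactly what will push $s_{i}$ above $1$ rather than merely above $0$, which is what is needed to iterate into Lemma \ref{unionlemaleft} at the next $\cup$-node.

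First I would check that \subia~is applicable, i.e.\ that $\alpha+\beta\neq 2$, and in fact that $p_{i}=\alpha+\beta-2<0$. The hypothesis $a_{i}>-\frac{M+1}{1-s_{i+1}}$ is well-posed because $0<s_{i+1}<1$ gives $1-s_{i+1}>0$ while $M<-1$ gives $-(M+1)>0$, so the right-hand side is positive. Rearranging the hypothesis yields $a_{i}(1-s_{i+1})>-(M+1)$, equivalently $\frac{M+1}{a_{i}}>s_{i+1}-1$, so $p_{i}=s_{i+1}-1-\frac{M+1}{a_{i}}<0$ as claimed, giving the negative permanent assignment.

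Next I would show $s_{i}=f(\alpha,\beta)=\frac{\alpha\beta-1}{\alpha+\beta-2}>1$. Since $p_{i}<0$, the inequality $s_{i}>1$ is equivalent to $\alpha\beta-1<\alpha+\beta-2$, i.e.\ $(\alpha-1)(\beta-1)<0$. Here $\alpha-1=s_{i+1}-1<0$ by assumption, and $\beta-1=-\frac{M+1}{a_{i}}>0$ because $M+1<0$; the product is strictly negative, so $s_{i}>1$ follows immediately.

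I do not anticipate a genuine obstacle: the argument is purely algebraic once the signs are tracked. The only delicate point is to notice that $M+1<0$ reverses the sign of $\beta-1$ compared with Lemma \ref{crosslema}, which is precisely what forces the remaining value to exceed $1$ and thereby match the input hypothesis of Lemma \ref{unionlemaleft} in the subsequent step of the recursion on the specialized cotree.
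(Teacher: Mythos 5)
Your proposal is correct and follows essentially the same algebraic route as the paper: verify $p_i=\alpha+\beta-2<0$ from the hypothesis on $a_i$, apply \subia, then show $s_i=f(\alpha,\beta)>1$. The one cosmetic difference is that you rewrite the condition $s_i>1$ as $(\alpha-1)(\beta-1)<0$ and read off the signs, whereas the paper reduces it to $s_{i+1}(-(M+1))<-(M+1)$ and concludes $s_{i+1}<1$; both are the same inequality, and your factorization is arguably the cleaner way to see why $M+1<0$ is the decisive sign.
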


\begin{proof}

\begin{figure}[H]
\centering
\begin{tikzpicture}
\path( 1,1)node[shape=circle,draw,label=above:$i$,inner sep=0] (pri) {$\otimes$}
     ( 0,0)node[shape=circle,draw,label=below:$s_{i+1}$,fill=black] (pri1) {}
     ( 2,0)node[shape=circle,draw,label=below:$1-\frac{M+1}{a_{i}}$,fill=black] (pri2) {}

     ( 4,1)node[shape=circle,draw,label=right:$-$,fill=black] (seg2) {}

     ( 6,1)node[shape=circle,draw,label=above:$i$,inner sep=0] (ter) {$\otimes$}
     ( 6,0)node[shape=circle,draw,label=below:$s_{i}>1$,fill=black] (ter1) {};

      \draw[-](pri)--(pri1);
      \draw[-](pri)--(pri2);


      \draw[-](ter)--(ter1);

       \node[text width=6cm, anchor=west, right] at (2.5,0.5) {$\rightarrow$};

\end{tikzpicture}
\caption{\label{crossstepleft} Join step.}
\end{figure}
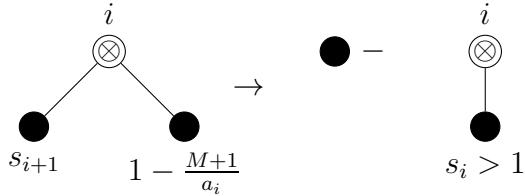

Since $\alpha=s_{i+1}$ and $\beta=1-\frac{M+1}{a_{i}}$ then $\alpha+\beta-2=s_{i+1}+1-\frac{M+1}{a_{i}}-2=s_{i+1}-1-\frac{M+1}{a_{i}}<0$
if and only if $a_{i}>-\frac{M+1}{1-s_{i+1}}.$ Therefore, we can use \subia~ and the permanent diagonal assignment will be
$$p_{i}=d_{k}\leftarrow s_{i+1}-1-\frac{M+1}{a_{i}}<0.$$
And the remaining one
$$s_{i}=d_{l}\leftarrow f\left(s_{i+1},1-\frac{M+1}{a_{i}}\right)=\frac{(s_{i+1})(1-\frac{M+1}{a_{i}})-1}{s_{i+1}-1-\frac{M+1}{a_{i}}}>1$$
if and only if
$$s_{i+1}(-(M+1))<-(M+1),$$
which is satisfied if and only if $s_{i+1}<1$.

\end{proof}

As in Section \ref{secN}, the two lemmas above used together produce an algorithm to construct threshold graphs that are $[M,-1)$-eigenvalue free.

\textbf{Initial step}: Considering $r$ odd, after the specialization of the leaves with identical value, the $\otimes$-node at depth $r$ has a leaf with assignment $$s_{r}=1-\frac{M+1}{a_{r}}>1$$ if and only if
$\frac{-(M+1)}{a_{r}}>0$, which is satisfied for all $a_{r}\geq 2$.
 Then, using $a_{r}\geq 2$ and Lemmas \ref{unionlemaleft} and \ref{crosslemaleft} we can construct a threshold graph of depth $r$ that is $[M,-1)$-eigenvalue free.
And, for $r$ even, after the specialization, the $\cup$-node at depth $r$ has a leaf with assignment $$s_{r}=\frac{-M}{a_{r}}.$$
$s_{r}>0$ is trivially satisfied and $s_{r}<1$ iff $a_{r}>-M$. Then, using $a_{r}>-M$ and Lemmas \ref{unionlemaleft} and \ref{crosslemaleft} we can construct a threshold graph of depth $r$ that is $[M,-1)$-eigenvalue free.\\

\begin{remark}
In the left case we have $a_{r}$ free if $r$ is odd and in the right case we have $a_{r}$ free if $r$ is even.
\end{remark}

The next theorem sums up the above results and its proof is similar to the proof of Theorem \ref{main_theorem1}.

\begin{teore}
\label{main_theorem2}
  Let $M<-1$ be a fixed number and $r$ an odd number (the even case is similar), if we choose natural numbers
  $a_{r} \geq 2$, $a_{r-1}> -M\left(1-\frac{1}{s_{r}}\right)$, $a_{r-2}> \frac{M+1}{1-s_{r-1}}$, etc., then $T_{G}(a_{1},\, a_{2},\, \ldots,\, a_{r})$ is $[M,\, -1)$-eigenvalue free.
\end{teore}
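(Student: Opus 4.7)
The proof mirrors that of Theorem \ref{main_theorem1}, now with the target equality $M_{+} = (-1)_{0} + 0_{0} + 0_{+}$, since establishing this would force $G$ to have no eigenvalues in the half-open interval $[M, -1)$. By the computation in Section \ref{Inertia-1}, for $r$ odd this right-hand side equals $(n-r) + (|\cup|+1)$, so it suffices to run Diag$(T_{G},-M)$ and verify that the number of positive entries in the final diagonal equals $(n-r) + |\cup| + 1$.

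My plan is to execute Diag$(T_{G},-M)$ in the same two-phase fashion used in Example \ref{example1}. In the first phase, I apply Lemmas \ref{lem10} and \ref{lem11} to the leaves of identical value at each node; since $-M > 1 > 0$, the permanent assignments $\frac{j+1}{j}(-M-1)$ and $\frac{j+1}{j}(-M)$ from equations (\ref{crossperm}) and (\ref{cupperm}) are all strictly positive, contributing $\sum_{i}(a_{i}-1) = n-r$ to $M_{+}$. This reduces the task to processing the specialized cotree of Figure \ref{specializedleft}.

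In the second phase, I traverse the specialized cotree from depth $r$ down to depth $1$, alternating Lemmas \ref{unionlemaleft} and \ref{crosslemaleft}. The initial step uses $s_{r} = 1 - (M+1)/a_{r} > 1$, which holds for every $a_{r} \geq 2$ because $-(M+1)/a_{r} > 0$. Having $s_{r} > 1$ sets up the hypothesis of Lemma \ref{unionlemaleft} at depth $r-1$: the bound $a_{r-1} > -M(1 - 1/s_{r})$ from the theorem is exactly what that lemma requires, so it returns a positive permanent $p_{r-1}$ together with a remaining value $0 < s_{r-1} < 1$. That in turn triggers Lemma \ref{crosslemaleft} at depth $r-2$ under the hypothesis $a_{r-2} > -(M+1)/(1-s_{r-1}) = (M+1)/(1-s_{r-1}) \cdot (-1)$ restated as in the theorem, producing a negative permanent $p_{r-2}$ and $s_{r-2} > 1$. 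By induction on the iteration index, alternation persists all the way to depth $1$: every $\cup$-node contributes a positive permanent value, every $\otimes$-node except the one at depth $r$ contributes a negative permanent value, and the final remaining value $s_{1} > 1 > 0$ contributes one more positive entry.

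Totaling the contributions gives $M_{+} = (n-r) + |\cup| + 1$, as required. The main obstacle is simply the bookkeeping that ensures the alternating hypotheses $s_{i+1} > 1$ (for \subiia) and $0 < s_{i+1} < 1$ (for \subia) are preserved at every step; the lower bounds on $a_{i}$ prescribed in the theorem statement are designed precisely to guarantee this, so a short induction closes the argument. The case of $r$ even is handled identically, except that the initial node is a $\cup$-node with $s_{r} = -M/a_{r}$, for which $0 < s_{r} < 1$ is equivalent to $a_{r} > -M$; the recursion then begins with Lemma \ref{crosslemaleft} and the final tally becomes $M_{+} = (n-r) + |\cup|$, matching the corresponding formula from Section \ref{Inertia-1}.
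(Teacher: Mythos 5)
Your proposal is correct and follows essentially the same two-phase Diagonalize argument the paper uses for Theorem \ref{main_theorem1}, which the paper explicitly states is the template for this result ("its proof is similar to the proof of Theorem \ref{main_theorem1}"). You correctly establish the target count $M_{+}=(n-r)+|\cup|+1$ for $r$ odd by first harvesting $n-r$ positive permanent entries from the identical-leaf specialization (valid since $-M>1$), then alternating Lemmas \ref{unionlemaleft} and \ref{crosslemaleft} on the specialized cotree, and you handle the $r$-even variant with the modified initial constraint $a_{r}>-M$, all exactly as intended.
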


Now we can define an algorithm to produce threshold graphs $T_{G}(a_{1},\, a_{2},\, \ldots,\, a_{r})$  satisfying Theorem \ref{main_theorem2}:\\

\begin{algorithm}[H]
\caption{The Left Free Interval Algorithm: LFI$(N,r)$}\label{alg2}
\begin{algorithmic}
\Require a negative real number $M<-1$ and a positive integer $r$
\Ensure cotree $T_{G}(a_{1},a_{2},\ldots,a_{r})$
\If{$r$ is odd}
    \State Choose $a_{r}\geq 2$
    \State $s_{r} \gets 1-\frac{1+M}{a_{r}}$  
\ElsIf{$r$ is even}
    \State  Choose $a_{r} \geq 1+\left\lfloor -M \right\rfloor$
    \State $s_{r} \gets  -\frac{M}{a_{r}}$
\EndIf
\For{$i=r-1$ to $1$}

\If{$i$ is odd}
    \State Choose $a_{i}\geq 1+\left\lfloor \frac{-M-1}{1-s_{i+1}}\right\rfloor$
    \State $p_{i} \gets s_{i+1}-1-\left(\frac{M+1}{a_{i}}\right) $
    \State $s_{i} \gets f\left(s_{i+1},1-\left(\frac{M+1}{a_{i}}\right)\right)$
\ElsIf{$i$ is even}
    \State Choose $a_{i}\geq 1+ \left\lfloor -M+\frac{M}{s_{i+1}} \right\rfloor$
    \State $p_{i} \gets s_{i+1}-\frac{M}{a_{i}} $
    \State $s_{i} \gets g\left(s_{i+1},-\frac{M}{a_{i}}\right)$
\EndIf

\EndFor

\end{algorithmic}
\end{algorithm}
As before, the initial threshold is obtained by making the smallest choice for each $a_{i}$ in {\rm LFI}$(M,r)$.\\

Next, we denote the first negative eigenvalue of a graph $G$ smaller than $-1$ by a $-$ sign in the exponent, as $\theta^{-}(G)$.

\begin{Example}
\label{counter}
Given $M=-3.3$ and $r=7$ then {\rm LFI}$(-3.3,\, 7)$  generates the initial threshold graph with cotree $T_{G}( 11,4,46,3,,35,2,2)$ which is $[-3.3, -1)$-eigenvalue free. Indeed, a direct computation shows that the maximum negative eigenvalue is $\theta^-(G)=-3.30000464177< M$.
\end{Example}

Curiously, the left  initial threshold graph and the right  initial threshold graph coincides in {\rm LFI}$(M,\, 7)$={\rm RFI}$(N,\, 7)$ = $T_{G}( 1,1,1,1,1,1,2)$ for $M=\frac{-1-\sqrt{2}}{2} <-1$ and $N=-1-M=\frac{-1+\sqrt{2}}{2}>0$. A direct computation shows that $\theta^-(G)=-1.24338010982 < M=-1.20710678118$ and $\theta^+(G)=0.231890667597 > N=0.20710678118$.
Notice that $T_{G}( 1,1,1,1,1,1,2)$ is the cotree associated to an anti-chain graph of order $n=8$.
However, as we exemplify below, this correspondence between $N$ and $M=-N-1$ does not hold for any threshold.

Using Example \ref{counter}, where $G$ is the threshold graph with associated cotree $T_{G}( 11,4,$ $46,$ $3,35,2,2)$, we compute $\theta^+(G)=0.558865493736 < N=-1-M= 2.3$. Actually, the right initial threshold graph obtained from RFI$(-3.3,\, 7)$ is
$T_{G}( 3,14,3,16,3,14,4)$ which satisfies $\theta^+(G)=2.30004052499> 2.3$.

\section{Infinite families}
\label{Family}

In this section, we show that the initial threshold graph generated by Algorithms \ref{alg1} or \ref{alg2} is an initial threshold graph for a family of threshold graphs that have the same $I$-eigenvalue free property.

We recall below the known interlacing property that can be found in \cite{hall2009interlacing}.

\begin{teore}
\label{interlacing}
Let $G$ be a graph and $H=G-v$, where $v$ is a vertex of $G$. If
$\lambda_{1}\geq\lambda_{2}\geq\cdots\geq\lambda_{n}$ and $\theta_{1}\geq\theta_{2}\geq\cdots\geq\theta_{n-1}$ are the eigenvalues of $A(G)$ and $A(H)$, respectively, then
$$\lambda_{i}\geq\theta_{i}\geq\lambda_{i+1}\mbox{ for each }i=1,2,\ldots,n-1.$$
\end{teore}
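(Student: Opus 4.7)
The plan is to recognize that the statement is the classical Cauchy interlacing theorem applied to the principal submatrix $A(H)$ of $A(G)$, since deleting the vertex $v$ from $G$ corresponds precisely to deleting the row and column indexed by $v$ in the adjacency matrix. Consequently I would invoke the Courant--Fischer min-max characterization of eigenvalues to handle both inequalities in a uniform way.

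First I would recall that for a real symmetric $m \times m$ matrix $B$ with eigenvalues $\mu_1 \geq \cdots \geq \mu_m$ one has the two variational formulas
$$\mu_k \;=\; \max_{\dim S = k}\ \min_{0 \neq x \in S} \frac{x^\top B x}{x^\top x} \;=\; \min_{\dim W = m-k+1}\ \max_{0 \neq x \in W} \frac{x^\top B x}{x^\top x}.$$
Next, for every subspace $T \subseteq \mathbb{R}^{n-1}$ I would introduce the natural embedding $\iota(T) \subseteq \mathbb{R}^n$ obtained by inserting a zero in the coordinate associated with $v$. The map $\iota$ is dimension-preserving, and for any $y \in T$ with image $\iota(y)$ the Rayleigh quotients coincide: $\iota(y)^\top A(G)\,\iota(y) = y^\top A(H)\,y$ and $\iota(y)^\top \iota(y) = y^\top y$, since the inserted zero annihilates the deleted row and column of $A(G)$.

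To obtain $\lambda_i \geq \theta_i$ I would apply the max-min formula: the outer maximum for $\lambda_i$ ranges over all $i$-dimensional subspaces of $\mathbb{R}^n$, a strictly larger collection than $\{\iota(T):\dim T = i\}$, while the inner minima agree on embedded subspaces. For $\theta_i \geq \lambda_{i+1}$ I would invoke the dual min-max formula with subspace dimension $n-i$: the outer minimum for $\lambda_{i+1}$ runs over more subspaces than the embedded ones, and the inner maxima still coincide, so the minimum defining $\theta_i$ dominates $\lambda_{i+1}$. The main obstacle is purely bookkeeping, namely selecting the correct variational formula for each inequality and checking the dimension and Rayleigh-quotient invariance under $\iota$; since this is a textbook consequence of Courant--Fischer, no genuine difficulty arises.
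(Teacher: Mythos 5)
The paper does not prove this theorem at all: it is stated as a recalled classical fact (the Cauchy interlacing theorem) with a citation, so there is no ``paper proof'' to compare against. Your argument is the standard Courant--Fischer proof and it is correct. The key mechanics all check out: $A(H)$ is the principal submatrix of $A(G)$ obtained by deleting the row and column of $v$; the coordinate-zero embedding $\iota$ preserves both dimension and Rayleigh quotient; the max--min formula with $i$-dimensional subspaces gives $\lambda_i\geq\theta_i$ since the embedded subspaces form a subfamily of all $i$-dimensional subspaces of $\mathbb{R}^n$; and the dual min--max formula with subspaces of dimension $n-i$ (which is $(n-1)-i+1$ for $\theta_i$ in the smaller space and $n-(i+1)+1$ for $\lambda_{i+1}$ in the larger one) gives $\theta_i\geq\lambda_{i+1}$ by the same subfamily observation. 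The only thing worth spelling out if you write it formally is precisely this index arithmetic confirming that the two min--max problems really do range over subspaces of the same dimension $n-i$, which you allude to but do not display.
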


Given a threshold graph $G$ with eigenvalues $\lambda_{1}\geq\lambda_{2}\cdots\geq\lambda_{n}$ and $H=G-v$ with eigenvalues $\theta_{1}\geq\theta_{2}\cdots\geq\theta_{n-1}$.

Consider the cotree $T_{H}(a_{1},\ldots,a_{r})$ of depth $r$ associated to the threshold graph $H$ with $a_{i}\geq 1$ for $1\leq i\leq r-1$ and $a_{r}\geq 2$. Denote $m(0,H)=p$ and $m(-1,H)=q$ the respective multiplicities of $0$ and $-1$ in $H$. We recall that
$$\theta^{+}(H)=\mbox{ first positive eigenvalue of $H$ greater than $0$,}$$
and
$$\theta^{-}(H)=\mbox{ first negative eigenvalue of $H$
smaller than $-1$.}$$

Suppose that $M<\theta^{-}(H)$ and $\theta^{+}(H)>N$.
Using the notation above we have the following:
$$\theta_{n-1}\leq\cdots\leq\theta_{i+p+q+2}\leq\theta_{i+p+q+1}<M<-1=
\underbrace{\theta_{i+p+q}=\cdots=\theta_{i+p+2}=\theta_{i+p+1}}_{q=m(-1,H)}<0$$

$$0=\underbrace{\theta_{i+p}=\cdots=\theta_{i+2}=\theta_{i+1}}_{p=m(0,H)}<N<\theta_{i}\leq\cdots\theta_{2}\leq\theta_{1}.$$

Hence, $\theta_{i+p+q+1}=\theta^{-}<M$ and $\theta_{i}=\theta^{+}>N$.

Notice that, when we add a vertex $v$ to the threshold $H$ then we have two possibilities:
\begin{enumerate}
	\item we add a vertex $v$ to a $\cup$-node at depth $l$ that has $a_{l}$ terminal vertices in $H$. Then $G$ will have a $\cup$-node at depth $l$ that has $a_{l}+1$ terminal vertices;
	\item we add a vertex $v$ to a $\otimes$-node at depth $l$ that has $a_{l}$ terminal vertices in $H$. Then $G$ will have a $\otimes$-node at depth $l$ that has $a_{l}+1$ terminal vertices.
\end{enumerate}

Using Theorems \ref{teo5} and \ref{teo6} we have the following:

\begin{enumerate}
	\item $m(0,G)=m(0,H)+1$ and $m(-1,G)=m(-1,H)$;
	\item $m(0,G)=m(0,H)$ and $m(-1,G)=m(-1,H)+1$;
\end{enumerate}

Now, we apply the interlacing Theorem \ref{interlacing}.

$$\lambda_{i}\geq\underbrace{\theta_{i}}_{\theta^{+}}\geq\underbrace{\lambda_{i+1}}_{?}\geq\theta_{i+1}=\lambda_{i+2}=\cdots=\lambda_{i+p}=\theta_{i+p}=0\geq \underbrace{\lambda_{i+p+1}}_{-1 \mbox{ or }0}\geq$$

$$\geq\theta_{i+p+1}=-1=\lambda_{i+p+2}=\cdots=\lambda_{i+p+q}=\theta_{i+p+q}=-1\geq\underbrace{\lambda_{i+p+q+1}}_{?}\geq\underbrace{\theta_{i+p+q+1}}_{\theta^{-}}\geq\lambda_{i+p+q+2}.$$

Considering the two possible cases we conclude the following.
\begin{enumerate}
	\item  $m(0,G)=p+1$ and $m(-1,G)=q$. It implies that $\lambda_{i+1}=\lambda_{i+p+1}=0$ and $\lambda_{i+p+q+1}=-1$.
	\item $m(0,G)=p$ and $m(-1,G)=q+1$. It implies that $\lambda_{i+1}=0$ and $\lambda_{i+p+1}=\lambda_{i+p+q+1}=-1$.
\end{enumerate}

And, it implies that $$\lambda_{i}=\lambda^{+}(G)\geq\theta_{i}=\theta^{+}>N \mbox{ and } \lambda_{i+p+q+2}=\lambda^{-}(G)\leq\theta_{i+p+q+1}=\theta^{-}<M.$$

The above computations motivate the following definition.

\begin{defn}
Let $T_{G}(a_{1},\ldots,a_{r})$ with $a_{i}\geq 1$ for $1\leq i\leq r-1$ and $a_{r}\geq 2$, and $T_{G^{'}}(b_{1},b_{2},\ldots,b_{r})$ be two threshold graphs. We say that  $G \preceq G^{'}$ if $a_{i}\leq b_{i}$ for $i=1,2,\ldots,r$ i.e., if $G^{'}$ is generated from $G$ by adding any amount of leaves to $\cup$-nodes or $\otimes$-nodes (of course, $G \prec G^{'}$ if $G \preceq G^{'}$ and $G \neq G^{'}$). Notice that $G$ and $G^{'}$ have the same depth.
\end{defn}

Then, we have proved the next result.

\begin{coro}
\label{free_int_increasing}
For $I=(0,N]$ or $I=[M,-1)$ we have that,
if $T_{G}(a_{1},\ldots,a_{r})$ is $I$-eigenvalue free, then $T_{G^{'}}(b_{1},\ldots,b_{r})$ is $I$-eigenvalue free.
\end{coro}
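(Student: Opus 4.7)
The plan is to reduce the corollary to the single-leaf-addition case already treated in the paragraphs preceding the statement, and then iterate. Since $G \preceq G^{'}$ means $a_i \leq b_i$ coordinatewise, I can build a chain
\[
G = G_0 \prec G_1 \prec \cdots \prec G_t = G^{'}
\]
in which each $G_{k+1}$ is obtained from $G_k$ by adding exactly one vertex to an existing $\cup$-node or $\otimes$-node of the cotree (at each step, pick any coordinate in which $G_k$ still lags behind $G^{'}$ and add a single leaf there). Crucially, every $G_k$ has depth $r$, so the structural hypotheses that drive the interlacing analysis remain in force throughout the chain.

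Next, I invoke the computation performed immediately before the corollary. For any two consecutive graphs $H := G_k$ and $G := G_{k+1}$, Theorem \ref{interlacing} combined with the multiplicity identities of Theorems \ref{teo5} and \ref{teo6} yields
\begin{align*}
  \theta^{+}(H) > N &\implies \lambda^{+}(G) \geq \theta^{+}(H) > N, \\
  \theta^{-}(H) < M &\implies \lambda^{-}(G) \leq \theta^{-}(H) < M.
\end{align*}
The essential point, already made by the authors, is that adding one leaf shifts exactly one of $m(0,\cdot)$ or $m(-1,\cdot)$ by $+1$, which forces the single new eigenvalue of $G$ that appears in the gap around the forbidden interval to land on the trivial value $0$ or $-1$, never inside $I$.

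To conclude, I would argue by induction on $k$: the base case $k=0$ is the hypothesis that $G$ is $I$-eigenvalue free, and the inductive step is exactly the displayed implication. After $t$ iterations one obtains that $G^{'} = G_t$ is $I$-eigenvalue free, which is the claim.

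I do not expect a genuine obstacle here, since the analytical core (the interlacing bookkeeping at $0$ and $-1$) has already been carried out. The only thing to verify with some care is that every step of the chain truly adds a leaf to an \emph{existing} interior node, so that depth and cotree shape are preserved and the formulas of Theorems \ref{teo5} and \ref{teo6} continue to apply with the same indexing as in the pre-corollary derivation; this is immediate from the definition of $\preceq$, which fixes the common depth $r$.
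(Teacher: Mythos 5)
Your proposal is correct and follows essentially the same route as the paper: the authors carry out exactly this single-leaf interlacing bookkeeping (combining Theorem \ref{interlacing} with the multiplicity shifts from Theorems \ref{teo5} and \ref{teo6}) in the paragraphs preceding the corollary, and the corollary is then obtained by iterating that one-step result over a chain of single-vertex additions, which is what your induction on $k$ makes explicit. Your added care in noting that each $G_k$ keeps depth $r$ (so the cotree shape and the indexing of Theorems \ref{teo5}--\ref{teo6} persist) is a worthwhile sanity check but not a departure from the paper's argument.
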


In Example \ref{rightex2}, given $N=4.8$ and $r=6$, RFI$(4.8,6)$ generated the initial threshold graph $G$ with associated cotree $T_{G}(6,44,6,36,5,2)$ which is $(0,4.8]$-eigenvalue free. By Corollary \ref{free_int_increasing}, any threshold graph represented by the cotree $T_{G^{'}}(6+i_{1},44+i_{2},6+i_{3},36+i_{4},5+i_{5},2+i_{6})$ for any integers $i_{j}\geq 0$, $1\leq j\leq 6$, is $(0,4.8]$-eigenvalue free.

\section{Revisiting Ghorbani's work}
\label{secrec}

In this section we use our approach to revisit the conjecture proposed by Aguilar et al. in  \cite{aguilar2018spectral} proved  by Ghorbani in \cite{ghorbani2019eigenvalue}.

\begin{teore}
\label{right}
Given $N=\frac{-1+\sqrt{2}}{2} = 0.20710678+$ and $r$ any odd natural number (the even case is identical) then the algorithm RFI$(N,r)$ generates the initial threshold graph with cotree $T_{G}(1,...,1,2)$ (with $r-1$ leafs 1's) that is $(0,N]$-eigenvalue free. As $T_{G}(1,...,1,2)$ has less vertices than any other threshold graph $T_{G'}(a_{1},\, a_{2},\, \ldots,\, a_{r})$ with $a_{r}\geq 2$, $a_{i}\geq 1$ for $1\leq i\leq r-1$, and $r$ odd, by Corollary \ref{free_int_increasing} we conclude that there is no threshold graph with eigenvalues in the interval $(0,N]$.
\end{teore}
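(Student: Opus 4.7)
My plan is to show that RFI$(N,r)$ with $N=\frac{-1+\sqrt{2}}{2}$ and odd $r$ makes the minimal legal choices $a_r=2$ and $a_i=1$ for $1\le i<r$, so Algorithm \ref{alg1} outputs exactly $T_G(1,\dots,1,2)$. Once this is verified, Theorem \ref{main_theorem1} certifies that this graph is $(0,N]$-eigenvalue free, and Corollary \ref{free_int_increasing}, applied to the poset of depth-$r$ cotrees ordered by $\preceq$, extends $(0,N]$-freeness to every threshold of depth $r$; the even case is analogous and together they cover every connected threshold graph.

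For the base step, $N+1=\frac{1+\sqrt{2}}{2}\in(1,2)$, so $a_r=1+\lfloor N+1\rfloor=2$ is forced, giving $s_r=1-\frac{N+1}{2}=\frac{3-\sqrt{2}}{4}$. For each $i<r$, the minimality of $a_i=1$ is equivalent, by Lemmas \ref{unionlema} and \ref{crosslema}, to $s_{i+1}>N$ at even steps and $s_{i+1}>-1/N$ at odd steps. I would analyze this by composing the two one-step recurrences $s_i=g(s_{i+1},-N)$ and $s_{i-1}=f(s_i,-N)$ into a single Möbius map $\phi$ acting on odd-indexed $s_i$. Using the identity $(2N+1)^2=2$, equivalently $4N^2+4N=1$, the fixed-point equation for $\phi$ simplifies to $(2N+2)t^2-(2N+1)t+N=0$, whose discriminant vanishes, yielding a unique double fixed point $t^\ast=\frac{2N+1}{4N+4}=1-\frac{1}{\sqrt{2}}$.

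The key invariant to carry through the induction is $s_{r-2k}>t^\ast$ for all $k\ge 0$. Because $\phi$ is parabolic at $t^\ast$, the conjugation $\psi(t)=1/(t-t^\ast)$ turns $\phi$ into a translation $u\mapsto u+c$, so $1/(s_{r-2k}-t^\ast)=1/(s_r-t^\ast)+kc$; verifying $c>0$ by one explicit substitution, together with $s_r-t^\ast=\frac{N}{2}>0$ and $t^\ast-N=\frac{3-2\sqrt{2}}{2}>0$, gives $s_{r-2k}>t^\ast>N$ for every $k$, which is exactly the even-step condition. The odd-step condition $s_{i+1}>-1/N=-2(\sqrt{2}+1)$ is much slacker: the even-indexed iterates, as images of bounded odd-indexed values under $g(\cdot,-N)$, remain in the interval $(-1/\sqrt{2},0)$, comfortably above $-1/N$.

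The main obstacle, in my view, is extracting the parabolic-Möbius asymptotics cleanly enough to turn the dynamical picture into a closed inequality; once the sign of the translation constant $c$ is pinned down and $t^\ast$ is identified, the remainder is routine bookkeeping. With the minimality of the algorithm's choices confirmed, Theorem \ref{main_theorem1} applies to $T_G(1,\dots,1,2)$ and Corollary \ref{free_int_increasing} transports $(0,N]$-freeness to every threshold of depth $r$; running the analogous argument for even $r$ then completes the proof.
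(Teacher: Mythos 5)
Your proposal follows the paper's route: verify $a_r=2$ is forced, view the two-step composition $f\circ g$ (both with second argument $-N$) as a M\"obius map, identify its unique fixed point $t^\ast=1-\tfrac{1}{\sqrt{2}}$ (the paper's $\mu$), and argue the iterates starting from $s_r$ stay above $t^\ast>N$, which, combined with the much slacker odd-step bound $s_{i+1}>-1/N$, forces every $a_i=1$. The only variation is local: where the paper argues $\varphi'(t)<1$ for $t>\mu$ with support from Figure~\ref{fig:_varphi_psi}, you use the fact that the vanishing discriminant makes the map parabolic and conjugate it to a translation via $t\mapsto 1/(t-t^\ast)$, a cleaner, figure-free way to obtain the same one-sided invariance once the sign of the translation constant is verified.
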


\begin{proof}
A direct computation shows that $a_{r}\geq 1+\lfloor N+1\rfloor=2$ so the initial choice is $a_{r}=2$. From there our proof is by induction. Also by direct computation, we can compute  $s_{r}=3/4-1/4\,\sqrt{2} \simeq 0.3964>0$ and $a_{r-1} \geq  1+\lfloor \frac{N}{s_{r}}\rfloor=1+ \lfloor 0.5224 \rfloor=1$, so the initial choice is $a_{r-1}=1$ and compute $s_{r-1}=g\left(s_{r},  \frac{-N}{a_{r-1}}\right)= g\left(s_{r},-N\right)= {\frac { \left( -3+\sqrt {2} \right)  \left( -1+\sqrt {2}
 \right) }{10-6\,\sqrt {2}}}
\simeq -0.4336<0$. Analogously, $a_{r-2}\geq  1+\left\lfloor \frac{N+1}{1-\left(\frac{1}{s_{r-1}}\right)}\right\rfloor= 1+\lfloor 0.3651 \rfloor=1$, is the initial choice, and compute $s_{r-2}=f\left(s_{r-1},  1-\frac{N+1}{a_{r-2}}\right)= f(g(s_{r},-N),-N)$.  In what follows we obtain the initial choice equal to 1 and therefore $s_{2k}=f\left(g\left(s_{2k+2},-N\right),-N\right)$, $s_{2k+1}=g\left(f\left(s_{2k+3},-N\right),-N\right)$. Now we will consider $k+1$. Both recurrences are explicit and easy to solve. For even indices we get $s_{2k}=f(g(s_{2k+2},-N),-N)=\varphi(s_{2k+2})$ where $$\varphi(t)={\frac { \left( 2\,\sqrt {2}+1 \right) t+2-2\,\sqrt {2}}{4\, \left( 1+
\sqrt {2} \right) t-2\,\sqrt {2}+1}}.$$ It is easy to see that $\varphi(t)=t$ has a unique solution $\mu={\frac {\sqrt {2}}{2(1+\sqrt {2})}} \simeq 0.2928$.
Since $\varphi'(t)<1$ for $t>\mu= 0.2928$(see Figure~\ref{fig:_varphi_psi}, left) and $s_0= 0.3964$ we get that if $\mu< s_{2k} < s_0$ then $s_{2k}=\varphi(s_{2k+2}) \in (\mu, s_{r})$ meaning that $0=\left\lfloor \frac{N}{\mu}\right\rfloor > \left\lfloor \frac{N}{s_{2k+2}}\right\rfloor>\left\lfloor \frac{N}{s_{r}}\right\rfloor=0 $, that is the initial choice for $2k+2$ is 1.\\
For the odd indices we get $s_{2k+1}=g(f(s_{2k+3},-N),-N)=\psi(s_{2k+3})$ where $$\psi(t)={\frac { \left( 1-\sqrt {2} \right)  \left( 2+ \left( -1+\sqrt {2}
 \right) t \right) }{ \left( 4\,\sqrt {2}-4 \right) t+5-2\,\sqrt {2}}}.$$  It is easy to see that $\psi(t)=t$ has a unique solution $\mu'={\frac {\sqrt {2}-2}{2\,\sqrt {2}-2}} \sim -0.7071$.
 Since $\psi'(t)<1$ for $t>\mu'= -0.7071$ (see Figure~\ref{fig:_varphi_psi}, right) and $s_{r-1}= -0.4336<0$ we get that, if $\mu'< s_{2k+3} < s_{r-1}$ then $s_{2k+1}=\psi(s_{2k+3}) \in (\mu', s_{r-1})$ meaning that $0=\left\lfloor \frac{N+1}{1-\left(\frac{1}{\mu}\right)}\right\rfloor > \left\lfloor \frac{N+1}{1-\left(\frac{1}{s_{2k+1}}\right)}\right\rfloor>\left\lfloor \frac{N+1}{1-\left(\frac{1}{s_{r-1}}\right)}\right\rfloor=0 $, which is the initial choice for $2k+1$ is also 1.\\

 In both cases we will always have $a_i=1$ as the initial choice. Given the decreasing convergence towards to the fixed points $\mu$ and $\mu'$ we deduce that our claim is true for a arbitrary large $r$, concluding our proof.
\begin{figure}
    \centering
    \includegraphics[width=6cm]{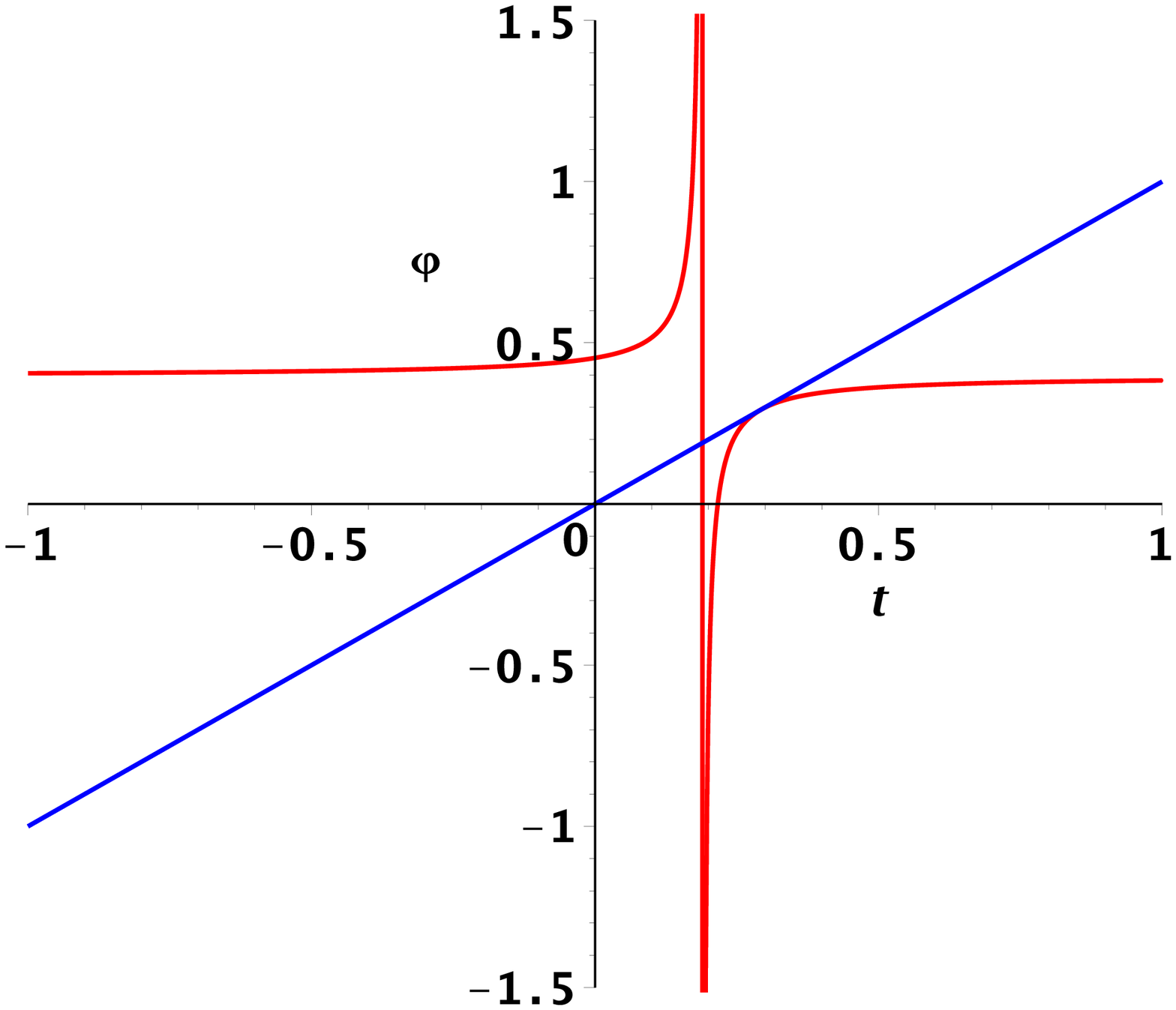}\;
    \includegraphics[width=6cm]{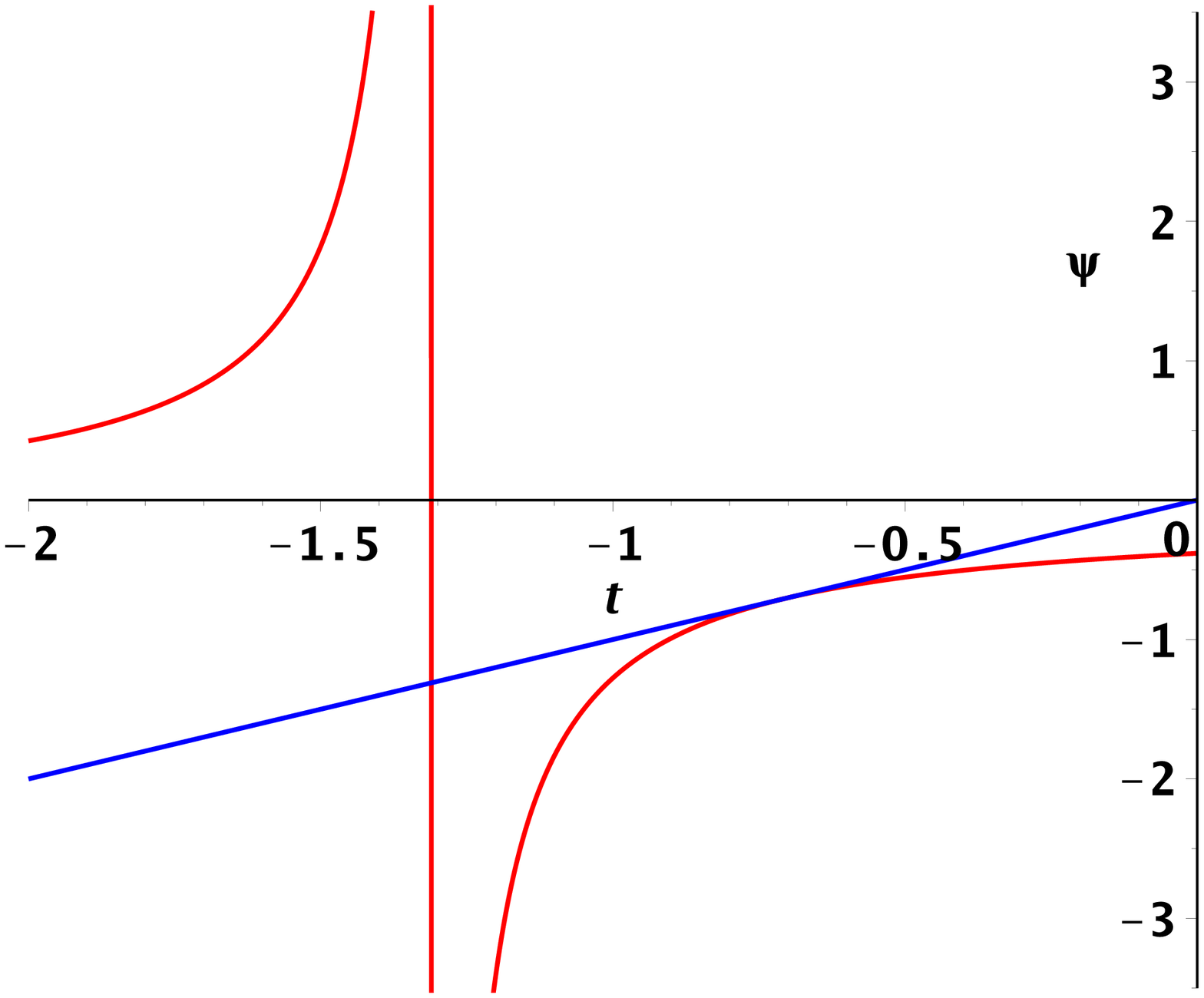}\;
    \caption{From the left, $\varphi(t)$ and $\psi(t)$}
    \label{fig:_varphi_psi}
\end{figure}
\end{proof}

The proof of the next result is similar to the one above.

\begin{teore}
\label{left}
Given $M=\frac{-1-\sqrt{2}}{2} = -1.207106781$ and $r$ any odd natural number (the even case is identical) then the algorithm LFI$(N,r)$ generates the initial threshold graph with cotree $T_{G}(1,...,1,2)$ (with $r-1$ leafs 1's) that is $(M,-1]$-eigenvalue free. As $T_{G}(1,...,1,2)$ has less vertices than any other threshold graph $T_{G'}(a_{1},\, a_{2},\, \ldots,\, a_{r})$ with $a_{r}\geq 2$, $a_{i}\geq 1$ for $1\leq i\leq r-1$, and $r$ odd, by Corollary \ref{free_int_increasing} we conclude that there is no threshold graph with eigenvalues in the interval $(0,N]$.
\end{teore}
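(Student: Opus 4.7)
The plan is to mirror the argument of Theorem \ref{right} under the left-interval algorithm LFI, using Lemmas \ref{unionlemaleft} and \ref{crosslemaleft} in place of their right-side analogues. Since $r$ is odd, LFI forces $a_r \geq 2$; take $a_r=2$ and substitute $M=\frac{-1-\sqrt{2}}{2}$ to obtain
\[
s_r \;=\; 1-\frac{1+M}{a_r} \;=\; \frac{3+\sqrt{2}}{4}\;>\;1,
\]
which places $s_r$ in the hypothesis region of Lemma \ref{unionlemaleft}.

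First I would verify the first two inductive steps by direct computation. At depth $r-1$ (a $\cup$-node) the algorithm's bound $a_{r-1}\geq 1+\lfloor -M+M/s_r\rfloor$ reduces to $1+\lfloor (3-\sqrt{2})/14\rfloor = 1$, so $a_{r-1}=1$ is the initial choice and Lemma \ref{unionlemaleft} gives $s_{r-1}=g(s_r,-M)=(1+5\sqrt{2})/14\in(0,1)$. At depth $r-2$ (a $\otimes$-node) the bound $1+\lfloor(-M-1)/(1-s_{r-1})\rfloor$ likewise equals $1$, so $a_{r-2}=1$ and Lemma \ref{crosslemaleft} delivers $s_{r-2}=f(s_{r-1},-M)>1$.

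Tentatively setting $a_i=1$ for all $i$ and writing
\[
\Phi(t)\;=\;f\bigl(g(t,-M),-M\bigr),\qquad \Psi(t)\;=\;g\bigl(f(t,-M),-M\bigr),
\]
with $f,g$ as in equations (\ref{functioncross}) and (\ref{functioncup}), the odd-indexed subsequence $s_r,s_{r-2},\ldots,s_1$ satisfies $s_{i-2}=\Phi(s_i)$ and lives in $(1,+\infty)$, while the even-indexed subsequence $s_{r-1},s_{r-3},\ldots,s_2$ satisfies $s_{i-2}=\Psi(s_i)$ and lives in $(0,1)$. Each of $\Phi,\Psi$ is a real fractional-linear transformation with a unique fixed point $\nu$, $\nu'$ which I would compute explicitly in terms of $\sqrt{2}$. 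The crucial analytic check is that $\Phi'(\nu)<1$ and $\Psi'(\nu')<1$, which, combined with the monotonicity of $\Phi,\Psi$ on the relevant intervals, forces both subsequences to converge monotonically to their fixed points and remain trapped in $(\nu,s_r)$ and $(\nu',s_{r-1})$ respectively. That trapping keeps every $s_{i+1}$ in the range where the floor expressions driving the bounds for $a_i$ evaluate to $0$, so by induction $a_i=1$ for every $1\leq i\leq r-1$, and LFI$(M,r)$ indeed outputs $T_G(1,\ldots,1,2)$.

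The main obstacle, exactly as in Theorem \ref{right}, is this contraction analysis: the inequalities $\Phi'(\nu)<1$, $\Psi'(\nu')<1$ together with the location of $\nu,\nu'$ inside the hypothesis ranges of Lemmas \ref{unionlemaleft}--\ref{crosslemaleft}. The value $M=\frac{-1-\sqrt{2}}{2}$ is precisely the tight boundary at which these estimates survive, mirroring the role of $N=\frac{-1+\sqrt{2}}{2}$ on the right side and consistent with the anti-regular coincidence noted after Example \ref{counter}. Once the contraction step is established, Corollary \ref{free_int_increasing} promotes the single threshold $T_G(1,\ldots,1,2)$ to nonexistence of eigenvalues in $[M,-1)$ for every odd-depth threshold graph; the even-$r$ case is identical except for the base step, where the LFI requirement $a_r\geq 1+\lfloor-M\rfloor=2$ again yields $a_r=2$ and the rest of the argument is unchanged.
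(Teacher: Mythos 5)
Your proposal follows precisely the approach the paper itself prescribes for Theorem \ref{left} (the paper defers to "the one above," i.e., Theorem \ref{right}): you correctly compute $s_r=\frac{3+\sqrt{2}}{4}>1$, $s_{r-1}=\frac{1+5\sqrt{2}}{14}\in(0,1)$, verify the floor bounds give $a_{r-1}=a_{r-2}=1$, and then set up the same alternating Möbius-recurrence/fixed-point contraction argument via $\Phi,\Psi$ that the paper carries out with $\varphi,\psi$ in Theorem \ref{right}. The one step you leave as "would compute explicitly" (the fixed points $\nu,\nu'$ and the derivative bounds) is exactly what completes the induction, and it is the same computation the paper performs for $N$ and implicitly asserts carries over to $M$.
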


\section{Future work}
\label{Future}

As a sequel of this work, we would like to investigate additional properties of the initial threshold graphs generated by Algorithms RFI$(N,r)$ and LFI$(N,r)$, such as the existence of minimal ones, w.r.t. the $I$-eigenvalue free property.

By Corollary~\ref{free_int_increasing}, the relation $G \preceq G^{'}$ preserves the $I$-eigenvalue free property. In this way, given for instance $I=(0,N]$ and the initial threshold graph $T_{G}(a_{1},\ldots,a_{r})$, obtained from RFI$(N,r)$, we already know that, if $G \preceq G^{'}$ then $G^{'}$ is also $(0,N]$-eigenvalue free. However, if $G^{''} \preceq G$ the situation is not clear. By a simple counting procedure we can see that there are exactly $K=a_{1}\cdot\ldots\cdot a_{r}$ of such graphs. For a fairly small number $N=\sqrt{5}=2.26+$ the initial threshold graph, obtained from RFI$(\sqrt{5},7)$, is $T_{G}(4,10,4,55,3,12,4)$, producing $K=4\cdot 10\cdot 4\cdot 55\cdot 3\cdot 12\cdot 4= 1,267,200$ threshold graphs smaller than itself to compare.   We can perform an exhaustive search for a $G^{''} \preceq G$ such that $G^{''}$ is also $(0,N]$-eigenvalue free, by directly computing the spectrum of each graph. This task could be extremely hard from a computational point of view but we can make it easy by using the Diagonalize algorithm (it took about 5 minutes for $N=\sqrt{5}$ using a naive implementation). Comparing the number of positive outputs for Diag$\left(T_{G''}, \frac{-1+\sqrt{2}}{2}\right)$ (which has no zero outputs, because $\frac{-1+\sqrt{2}}{2}$ is not eigenvalue for any threshold graph) and Diag$(T_{G''}, N)$ (which should not have zero outputs if $N < \theta^+(G'')$) we can see that $G^{''}$ is also $(0,N]$-eigenvalue free if, and only if, this number of positive outputs remains unchanged. Remembering that, after the specialization, $n-r$ outputs will be negative, we only need compute $r$ values in Diag$\left(T_{G''}, \frac{-1+\sqrt{2}}{2}\right)$ and Diag$(T_{G''}, N)$. We made this computation for several values $N>0$ never founding any $G^{''} \prec G$ such that $G^{''}$ is also $(0,N]$-eigenvalue free. Despite the efficiency of our method, it still prohibitive, for $N=3.5$ we must to perform about 22 million tests.  In other words, we conjecture that the initial graph is \emph{minimal} in the sense  that any \emph{smaller} threshold graph is not $(0,N]$-eigenvalue free.
\begin{conj}
\label{thm:minimal}
   Let $N>0$ and $r \geq 2$ be fixed numbers defining the interval $I=(0,N]$, and $T_{G}(a_{1},\ldots,a_{r})$, the initial threshold graph  obtained from RFI$(N,r)$, then $G$  is minimal, i.e., if $\tilde{G} \preceq G$ is such that $\tilde{G}$ is also $I$-eigenvalue free then $\tilde{G} = G$.
\end{conj}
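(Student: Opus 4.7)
The plan is a proof by contradiction: I would assume $\tilde{G} \prec G$ is also $(0,N]$-eigenvalue free and derive a contradiction from the minimality of the $a_j$ chosen by $\mathrm{RFI}(N,r)$. By transitivity of $\preceq$ it is enough to treat the base case $\tilde{a}_j = a_j-1$ for exactly one index $j$, with the general case following by induction on $\sum_i(a_i-\tilde{a}_i)$. The central observation is that $0_+(\tilde{G}) = 0_+(G)$: the formulas in Section \ref{Inertia0} give $0_+ = |\cup|+1$ (resp.\ $|\cup|$) for $r$ odd (resp.\ even), depending only on $r$ and not on the multiplicities. Hence $\tilde{G}$ is $(0,N]$-eigenvalue free iff $N_+(\tilde{G}) = 0_+(\tilde{G})$, which by Theorem \ref{inertia} can be checked directly on the output of $\mathrm{Diag}(T_{\tilde{G}}, -N)$.

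Next I would run $\mathrm{Diag}(T_{\tilde{G}}, -N)$ in parallel with $\mathrm{Diag}(T_G, -N)$. The two specialization phases agree except at node $j$, where the $\tilde{G}$-run produces one fewer negative permanent value and an altered remaining value $\tilde{s}_j^{\mathrm{init}}$ obtained from the $G$-formula by substituting $\tilde{a}_j$ for $a_j$. The key structural fact is that $a_j$ was chosen by $\mathrm{RFI}$ as the \emph{smallest} integer for which $s_j$ satisfies the sign hypothesis of Lemma \ref{unionlema} or Lemma \ref{crosslema}; so $\tilde{a}_j$ violates precisely this hypothesis, forcing $\tilde{s}_j$ to flip sign compared to $s_j$ (or to vanish, in which case $N$ is itself an eigenvalue of $\tilde{G}$ and we are immediately done). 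Thus at node $j$ the specialized iteration produces one diagonal entry whose sign differs from the corresponding $G$-entry.

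The main obstacle, and the likely reason the statement remains a conjecture, is controlling how this single sign flip propagates through the remaining iterations of Diag up to the root. The desired outcome is $N_+(\tilde{G}) = 0_+(\tilde{G}) - 1$ exactly; to obtain this I would analyze the recurrences $s_{i-1} = f(s_i, 1-(N+1)/a_{i-1})$ and $s_{i-1} = g(s_i, -N/a_{i-1})$ in the spirit of the fixed-point and monotonicity argument used in Theorem \ref{right}, aiming to show that the flipped value $\tilde{s}_j$ is steered into an interval where each subsequent application of $f$ or $g$ reproduces the original sign pattern of the permanent outputs. Because $\tilde{s}_j$ can be arbitrarily close to zero and $f,g$ can amplify such perturbations, a purely sign-monotonic argument may not suffice; a complementary approach combining the Cauchy interlacing Theorem \ref{interlacing} applied to a single leaf deletion with the exact inertia counts of Theorems \ref{teo4}--\ref{teo6} at both $-N$ and $0$ is likely needed to rule out cascading additional sign changes upstream.
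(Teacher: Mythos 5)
The statement you are attempting to prove is explicitly labeled a \emph{conjecture} in the paper; the authors say they intend to investigate it in future work and report only numerical evidence, so there is no paper proof to compare against. What you have submitted is not a proof either, and to your credit you say so: the third paragraph concedes that the central difficulty — controlling how a perturbation at one node propagates through the remaining iterations of Diagonalize — is unresolved. That difficulty is exactly where a proof would have to live, so the proposal stops at the boundary of what is actually known.

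Several of your preliminary observations are sound and worth keeping. The reduction to the case $\tilde{a}_j = a_j - 1$ at a single index $j$ is legitimate: if $\tilde{G} \prec G$ and $\tilde{G}$ were $I$-eigenvalue free, Corollary~\ref{free_int_increasing} would force every intermediate $G'$ with $\tilde{G} \preceq G' \preceq G$ to be $I$-eigenvalue free, including one that differs from $G$ at a single index, so the one-step case suffices. The observation that $0_+$ depends only on $r$ (via Section~\ref{Inertia0}) and not on the $a_i$ is correct and useful. And the sign flip at node $j$ is real: since RFI chooses $a_j$ as the least integer satisfying the strict inequality in Lemma~\ref{unionlema} or~\ref{crosslema}, the value $\tilde{a}_j = a_j - 1$ either violates the inequality (flipping the sign of the permanent value and of $\tilde{s}_j$) or produces $\alpha + \beta = 0$, which via subcase~2b makes $N$ itself an eigenvalue of $\tilde{G}$ and finishes the argument immediately.

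The genuine gap is the propagation step. Once $\tilde{s}_j$ has the wrong sign, Lemmas~\ref{unionlema} and~\ref{crosslema} no longer apply at levels $j-1, j-2, \dots, 1$ with the \emph{same} $a_i$, because their hypotheses ($a_i > N/\tilde{s}_{i+1}$, etc.) were calibrated to the unperturbed sequence $s_{i+1}$, not to $\tilde{s}_{i+1}$. Nothing rules out cascading sign changes, new zeros, or a net positive-count change of something other than $-1$. Your suggestion to adapt the fixed-point/monotonicity argument of Theorem~\ref{right} is plausible in spirit, but that argument relied on a very specific $N$ for which all $a_i$ equal $1$ and the map $\varphi$ is an explicit M\"obius transformation with a known contraction region; for general $N$ the perturbed orbit can land anywhere, including arbitrarily close to the singularities of $f$ and $g$, and no bound is offered. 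Likewise, invoking interlacing (Theorem~\ref{interlacing}) for a single leaf deletion gives one-sided inequalities on eigenvalues, not the exact inertia identity $N_+(\tilde G) = 0_+(\tilde G) - 1$ that you would need; in fact interlacing alone cannot exclude the possibility that deleting a leaf leaves $N_+$ unchanged. Until the propagation is controlled, the argument does not close, and the statement remains, as the paper says, a conjecture.
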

The conjecture is obviously true for  $N=\frac{-1+\sqrt{2}}{2}$ because the initial threshold graph, $T_{G}(2,1,1, \ldots,1)$, is the small one. In a future work we expect to investigate this conjecture trying to prove the minimality of the initial graph for each fixed $r$. We believe that probably does not exist a global minimum, except if we fix $r$, because we already know that the sequence $$\theta^+(T_{G}(1,2)), \theta^+(T_{G}(1,1,2)), \theta^+(T_{G}(1,1,1,2)), ...$$ converges to $N=\frac{-1+\sqrt{2}}{2}$.

\bibliographystyle{acm}
\bibliography{mybibliography}

\end{document}